\numberwithin{equation}{section}
\newtheorem{claim}{Claim}[section]
\newtheorem{thm}{Theorem}
\newtheorem{prop}[claim]{Proposition}
\newtheorem{lem}[claim]{Lemma}
\theoremstyle{definition}
\newtheorem{definition}[claim]{Definition} 
\newtheorem{rem}[claim]{Remark}
\newtheorem{conv}[claim]{Convention}
\newtheorem{notation}[claim]{Notation}
\newcommand{\N}{{\mathbb N}}
\newcommand{\Z}{{\mathbb Z}}
\newcommand{\R}{{\mathbb R}}
\newcommand{\USp}{{\mathrm{USp}}}
\newcommand{\U}{{\mathrm{U}}}
\renewcommand{\O}{{\mathrm{O}}}
\newcommand{\SO}{{\mathrm{SO}}}
\newcommand{\tr}{{\mathrm{tr}}}
\newcommand{\F}{\mathbb{F}}
\definecolor{dgreen}{RGB}{0,128,0}
\definecolor{auburn}{rgb}{0.43, 0.21, 0.1}
\author{Alexei Entin and Noam Pirani}
\title{Moments of traces of random symplectic matrices and hyperelliptic $L$-functions}
\date{}
\begin{document}

\begin{abstract}
    We study matrix integrals of the form
    $$\int_{\USp(2n)}\prod_{j=1}^k\tr(U^j)^{a_j}\mathrm d U,$$
    where $a_1,\ldots,a_r$ are natural numbers and integration is with respect to the Haar probability measure. We obtain a compact formula (the number of terms depends only on $\sum a_j$ and not on $n,k$) for the above integral in the non-Gaussian range $\sum_{j=1}^kja_j\le 4n+1$. This extends results of Diaconis-Shahshahani and Hughes-Rudnick who obtained a formula for the integral valid in the (Gaussian) range $\sum_{j=1}^kja_j\le n$ and $\sum_{j=1}^kja_j\le 2n+1$ respectively. We derive our formula using the connection between random symplectic matrices and hyperelliptic $L$-functions over finite fields, given by an equidistribution result of Katz and Sarnak, and an evaluation of a certain multiple character sum over the function field $\F_q(x)$.

    We apply our formula to study the linear statistics of eigenvalues of random unitary symplectic matrices in a narrow bandwidth sampling regime.
\end{abstract}

\maketitle

\section{Introduction}

An important tool in the study of random matrices drawn from the classical compact groups $G=\U(n),\USp(2n),\SO(n)$ are \emph{trace moments}, which are integrals of the form
\begin{equation}\label{eq: main int}\int_G\prod_{j=1}^k\left(\tr(U^j)^{a_j}\tr(U^{-j})^{b_j}\right)\mathrm dU,\quad a_j,b_j\in\Z_{\ge 0},\end{equation} where integration is with respect to the Haar probability measure on $G$. The study of such integrals was initiated by Dyson \cite{Dys62}, who evaluated (\ref{eq: main int}) in the case $G=\U(n),\,\sum_j(a_j+b_j)\le 2$, namely he showed that
$$\int_{\U(n)}\tr(U^j)\mathrm dU=0,\quad\int_{\U(n)}\tr(U^i)\tr(U^{-j})\mathrm dU=\delta_{ij}\min(j,n),\quad i,j\in\N$$
($\delta_{ij}$ is the Kronecker delta). He used these identities to compute the pair correlation of eigenvalues of random unitary matrices. It was later conjectured by Montgomery \cite{Mon73} that the zeros of the Riemann zeta function have the same pair correlation, an observation which was an early cornerstone in the study of the connections between $L$-functions and random matrices. For a survey of these connections see \cite{KaSa99a}.

Diaconis and Shahshahani \cite{DiSh94} (see also \cite{DiEv01} or \cite[\S 3.3]{Mec19} for a slightly corrected exposition) showed that
\begin{equation}\label{eq: DS unitary}\int_{\U(n)}\prod_{j=1}^k\left(\tr(U^j)^{a_j}\tr(U^{-j})^{b_j}\right)\mathrm dU=\prod_{j=1}^k\delta_{a_jb_j}j^{a_j}a_j!=\mathbb E\left(\prod_{i=1}^k(j^{1/2}Z_j)^{a_j}(\overline{j^{1/2}Z_j})^{b_j}\right),\end{equation} where $Z_1,\ldots,Z_k$ are independent standard complex Gaussians and $\sum_{j=1}^kj(a_j+b_j)\le 2n$. The relation (\ref{eq: DS unitary}) breaks down beyond this range, a phenomenon described in \cite{HuRu03} as the mock Gaussian behavior of $\tr(U^j)$. Diaconis and Shahshahani also derived corresponding formulae for the groups $G=\USp(2n),\,\SO(n)$ \footnote{Diaconis and Shahshahani computed trace moments for $\O(n)$ instead of $\SO(n)$. It is easy to deduce the same formula for $\SO(n)$ from the corresponding formulae for $\O(n),\USp(2m)$ using the Weyl integration formulae \cite[Theorem 3.5]{Mec19}.}. 
Before describing these we introduce some convenient notation and terminology.

Let $\mathcal P$ be the set of sequences of nonnegative integers with finite support. We denote elements of $\mathcal P$ by $\mathbf a = (a_1,a_2,\ldots),\,\mathbf b = (b_1,b_2,\ldots)\in\mathcal P$ etc. We define the length and the size of $\mathbf a\in\mathcal P$ to be
$$\ell(\mathbf a)=\sum_{j=1}^\infty a_j,\quad |\mathbf a|=\sum_{j=1}^\infty ja_j.$$
An element $\mathbf a\in\mathcal P$ can be viewed as a partition of the number $|\mathbf a|$ with $a_j$ parts of size $j$ (the total number of parts being $\ell(\mathbf a)$), and henceforth we will refer to elements of $\mathcal P$ as partitions. Alternatively, an element $\mathbf a\in\mathcal P$ can be viewed as a finite multiset of natural numbers with each element $j$ having multiplicity $a_j$ (the cardinality of this multiset is $\ell(\mathbf a)$).
The set $\mathcal P$ is partially ordered by $$\mathbf b\le\mathbf a\quad\mbox{iff}\quad \forall j\in\mathbb N:\, b_j\le a_j.$$ If $\mathbf b\le\mathbf a$ we denote
$$\mathbf a-\mathbf b=(a_1-b_1,a_2-b_2,\ldots)\in\mathcal P.$$ 
We also define
$${\mathbf a\choose\mathbf b}=\prod_{j=1}^\infty{a_j\choose b_j},$$ 

\begin{equation}\label{eq: def g}g(\mathbf a)=\prod_{j=1}^\infty g_j(a_j),\quad
g_j(a)=\left[\begin{array}{ll}0,&2\nmid ja,\\
j^{a/2}(a-1)!!,&2\nmid j,\,2|a,\\
\sum_{l=0}^\infty {a\choose 2l}j^l(2l-1)!!,&2|j,\end{array}\right.\end{equation}
where $(2m)!!=2\cdot 4\cdots 2m,\,(2m-1)!!=1\cdot 3\cdots (2m-1)$ is the double factorial (in particular $0!!=(-1)!!=1$). Note that all the sums and products appearing above are actually finite.

If $G=\USp(2n), \SO(n)$ and $U\in G$ then $\tr(U^j)\in\R$ and $\tr(U^{-j})=\tr(U^j)$, so it is enough to study integrals of the form 
$$M(G,\mathbf a)=\int_{G}\prod_{j=1}^\infty \tr(U^j)^{a_j}\mathrm d U,\quad\mathbf a=(a_1,a_2,\ldots)\in\mathcal P$$
(note that the product is actually finite).

Diaconis and Shahshahani \cite{DiSh94}
(see also \cite{DiEv01} or \cite[\S 3.3]{Mec19} for a slightly corrected exposition) proved that

\begin{alignat}{5}\label{eq: DS symplectic} M(\USp(2n),\mathbf a)&=(-1)^{\ell(\mathbf a)}g(\mathbf a)&=\mathbb E\left(\prod_{j=1}^\infty(j^{1/2}X_j-\eta_j)^{a_j}\right),&\quad\quad&|\mathbf a|\le n\\ \label{eq: DS orthogonal}
M(\SO(n),\mathbf a)&=g(\mathbf a)&=\mathbb E\left(\prod_{j=1}^\infty (j^{1/2}X_j+\eta_j)^{a_j}\right),&\quad\quad&|\mathbf a|\le \frac n2\end{alignat}
where
\begin{equation}\label{eq: def eta}\eta_j=\left[\begin{array}{ll}1,&2\,|\,j\\0,&2\nmid j,\end{array}\right.\end{equation} and $X_1,X_2\ldots$ are independent standard real Gaussians. 
Hughes and Rudnick \cite{HuRu03} showed that (\ref{eq: DS symplectic}) and (\ref{eq: DS orthogonal}) hold in the extended ranges $|\mathbf a|\le 2n+1$ and $|\mathbf a|\le n-1$ respectively.
Their method (based on cumulant expansions) is completely different from the representation-theoretic method of Diaconis-Shahshahani. Stolz \cite{Sto05} gave an alternative derivation of (\ref{eq: DS symplectic}),(\ref{eq: DS orthogonal}) in the extended ranges of Hughes-Rudnick using the original approach of Diaconis-Shahshahani. 
Pastur and Vasilchuk \cite{PaVa04} gave alternative proofs of (\ref{eq: DS unitary}),(\ref{eq: DS symplectic}),(\ref{eq: DS orthogonal}) using a new method inspired by statistical mechanics.

Using the methods of Hughes-Rudnick, Keating and Odgers \cite[Lemma 2]{KeOd08} derived formulae for $\int_G\tr(U^{j_1})\tr(U^{j_2})\mathrm dU,$ $G=\USp(2n),\SO(2n)$ valid for all $j_1,j_2$. 
These do not generally agree with (\ref{eq: DS symplectic}),(\ref{eq: DS orthogonal}) beyond the ranges established by Hughes and Rudnick, so the corresponding trace moments are not Gaussian beyond these ranges.

The main result of the present work is a formula for $M(\USp(2n),\mathbf a)$ in the extended non-Gaussian range $|\mathbf a|\le 4n+1$.

\begin{thm}\label{thm: main} Assume $|\mathbf a|\le 4n+1$. Then
\begin{equation}\label{eq: thm1}M(\USp(2n),\mathbf a)=(-1)^{\ell(\mathbf a)}\sum_{\mathbf b\le\mathbf a}{\mathbf a\choose\mathbf b}g(\mathbf b)\phi(n,\mathbf a-\mathbf b),\end{equation}
where 
\begin{equation}\label{eq: def phi}\phi(n,\mathbf c)=\left[\begin{array}{ll}
-\sum_{\mathbf d\le\mathbf c\atop{|\mathbf d|\le{|\mathbf c|}/2-n-1}}(-1)^{\ell(\mathbf d)}{\mathbf c\choose\mathbf d},&|\mathbf c|>0,\,2\mid |\mathbf c|,\\1,&|\mathbf c|=0, \\0,&|\mathbf c|>0,\,2\nmid|\mathbf c|. \end{array}\right.\end{equation}
\end{thm}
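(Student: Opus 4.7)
The plan is to evaluate $M(\USp(2n),\mathbf a)$ by interpreting it as the $q\to\infty$ limit of an average over a family of hyperelliptic $L$-functions, via the Katz-Sarnak equidistribution theorem. For each monic squarefree $D\in\F_q[x]$ of degree $2n+1$, the $L$-function $L(u,\chi_D)$ of the quadratic character $\chi_D$ is a polynomial of degree $2n$ whose inverse roots are $\sqrt q\,e^{\pm i\theta_{j}(D)}$, encoded by a conjugacy class $\Theta_D\in\USp(2n)$. Katz-Sarnak then yields
\[M(\USp(2n),\mathbf a)=\lim_{q\to\infty}\frac{1}{|\mathcal H_{2n+1}(q)|}\sum_{D\in\mathcal H_{2n+1}(q)}\prod_{j=1}^\infty\tr(\Theta_D^j)^{a_j},\]
where $\mathcal H_{2n+1}(q)$ is the set of monic squarefree polynomials of degree $2n+1$ over $\F_q$, so it suffices to compute this average.

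Next I would apply the explicit formula $\tr(\Theta_D^j)=-q^{-j/2}\sum_{\deg f=j}\Lambda(f)\chi_D(f)$ (where $\Lambda$ is the von Mangoldt function on $\F_q[x]$) and expand, giving
\[\prod_{j}\tr(\Theta_D^j)^{a_j}=(-1)^{\ell(\mathbf a)}q^{-|\mathbf a|/2}\sum_{(f_{j,i})}\Bigl(\prod_{j,i}\Lambda(f_{j,i})\Bigr)\chi_D\Bigl(\prod_{j,i}f_{j,i}\Bigr),\]
where the outer sum runs over tuples of monic $f_{j,i}$ of degree $j$ for $1\le i\le a_j$. Setting $F=\prod_{j,i}f_{j,i}$ (of total degree $|\mathbf a|$) and averaging over $D$, the problem reduces to evaluating $\mathbb E_D\chi_D(F)$. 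Writing $F=F^*G^2$ with $F^*$ squarefree, one has $\chi_D(F)=\chi_D(F^*)\,\mathbf 1_{\gcd(G,D)=1}$. Quadratic reciprocity in $\F_q[x]$ converts $\chi_D(F^*)$ into $\chi_{F^*}(D)$ up to an explicit degree-dependent sign, and the resulting character sum $\sum_{D\in\mathcal H_{2n+1}(q)}\chi_{F^*}(D)$ admits a clean evaluation: when $F^*=1$ the sum is essentially $|\mathcal H_{2n+1}(q)|$ times a coprimality density determined by $G$, while for $F^*\ne 1$ one applies the functional equation of $L(u,\chi_{F^*})$ to flip the long sum into a finite ``dual'' sum of length comparable to $\deg F^*-(2n+1)$.

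Finally I would assemble the pieces combinatorially. Contributions with $F$ a perfect square (grouping tuples by how the $f_{j,i}$ pair off into squares) yield the Gaussian main term and, after rearrangement involving the multinomials $\binom{\mathbf a}{\mathbf b}$, reproduce the part of \eqref{eq: thm1} with $\mathbf a-\mathbf b=\mathbf 0$, namely $g(\mathbf a)$. The non-square contributions vanish in the Hughes-Rudnick range $|\mathbf a|\le 2n+1$ (recovering $\phi(n,\mathbf c)=0$ for $|\mathbf c|<2n+2$), but in the extended range $|\mathbf a|\le 4n+1$ the functional equation produces finitely many dual terms that survive the $q\to\infty$ limit; these should match the truncated inclusion-exclusion defining $\phi(n,\mathbf c)$, with the cutoff $|\mathbf d|\le|\mathbf c|/2-n-1$ emerging from the degree of the dual polynomial. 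The principal obstacle is precisely this character-sum analysis: one must correctly handle the squarefree sieve over $D$, the reciprocity sign, and the functional equation, and then verify that the resulting combinatorial output is exactly the sum defining $\phi$. The range $|\mathbf a|\le 4n+1$ should be exactly the range in which the dual polynomial is short enough for the evaluation to close without further error terms, which explains the sharp threshold in the theorem.
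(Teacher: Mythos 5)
Your proposal follows essentially the same route as the paper: Katz--Sarnak equidistribution for the family $\mathcal H_{2n+1}$, the Weil explicit formula to convert trace moments into multiple character sums, a split into square and non-square contributions (the former giving $g(\mathbf b)$ via pairings, the latter giving $\phi$ via quadratic reciprocity and the functional equation, which the paper imports from the earlier character-sum estimates of Entin--Roditty-Gershon--Rudnick), and a final inclusion-exclusion assembly producing the $\binom{\mathbf a}{\mathbf b}$ factors. The step you flag as the principal obstacle is exactly where the paper's technical work lies, and your description of how the dual sum of length $\sim|\mathbf c|/2-n-1$ produces the truncation in $\phi$ matches what happens there.
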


Note that if $|\mathbf a|\le 2n+1$ then the only nonzero term in
(\ref{eq: thm1}) is with $\mathbf b=\mathbf a$ and (\ref{eq: thm1}) reduces to (\ref{eq: DS symplectic}) in this case.

Our method of proving Theorem \ref{thm: main} is completely different from the combinatorial and representation-theoretic approaches underlying previous results on trace moments for the classical compact groups. Instead we use the deep connection between random symplectic matrices and hyperelliptic $L$-functions over finite fields. Using an equidistribution result of Katz and Sarnak we relate the trace moments $M(\USp(2n),\mathbf a)$ to certain multiple character sums over primes in $\F_q[x]$ for large $q$. We are able to precisely estimate these sums provided $|\mathbf a|\le 4n+1$, which allows us to derive (\ref{eq: thm1}). The non-Gaussian correction terms $\phi(n,\mathbf a-\mathbf b),\,\mathbf b<\mathbf a$ arise from quadratic reciprocity and the functional equation for hyperelliptic $L$-functions over $\F_q$.

Previously the first author \cite{Ent14} gave a new derivation of (\ref{eq: DS unitary}) based on a similar approach, but using Artin-Schreier instead of hyperelliptic $L$-functions. The same approach was utilized by the first author, Roditty-Gershon and Rudnick \cite{ERR13} to compute the $n$-level density of the eigenvalues of random symplectic matrices for an improved range of test functions (a combinatorial calculation of this density, as well as a generalization to the orthogonal group, was later given by Mason and Snaith \cite{MaSn17}). The strategy of proving a complicated combinatorial identity with arithmetic significance by first proving the underlying arithmetic results over function fields also appears in the spectacular proof of the Langlands fundamental lemma by Ng\^o \cite{Ngo10}.

\begin{rem} It is natural to try to obtain an analog of Theorem 1 for the orthogonal group $\SO(n)$ using the family of Hasse-Weil $L$-functions of quadratic twists of a fixed elliptic curve $E/\F_q(x)$, which is known to have orthogonal symmetry. Unfortunately, it is hard to estimate the corresponding character sums directly beyond the Gaussian range $|\mathbf a|<n$.\end{rem}

\subsection{Applications to linear statistics of eigenvalues}

\begin{notation}
    $e(t)=e^{2\pi it}$.
\end{notation}

Integrals of the form (\ref{eq: main int}) are useful for the study of linear statistics of eigenvalues of random matrices drawn from the classical compact ensembles. These are expressions of the form
$W_F(U)=\sum_{i=k}^{N}F(\theta_k)$, where $e(\theta_1),\ldots,e(\theta_N)$ are the eigenvalues of $U$, the matrix $U$ is drawn at random from one of the groups $\U(n),\USp(2n),\SO(n)$ and $F\in C^{\infty}(\R/\Z)$ is a periodic test function. One is interested in the statistical behavior of $W_F(U)$, e.g. its moments, for large values of $n$. The test function $F$ may depend on $n$ and various regimes have been considered. Diaconis and Evans \cite{DiEv01} studied the regime of fixed $F$ and showed that the moments of $W_F(U)$ become Gaussian in the limit $n\to\infty$. Soshnikov \cite{Sos00} generalized this to a mesoscopic sampling regime. Hughes and Rudnick \cite{HuRu03, HuRu03a} studied the local sampling regime, that is when $F(t)=\sum_{u\in\Z}f(N(t+u))$ and $f\in\mathcal S(\R)$ is a fixed Schwartz function. In this regime the $m$-th moment of $W_F(U)$ is generally Gaussian only if $\hat f$ is supported on $[-1/m,1/m]$ (resp. $[-2/m,2/m]$) in the symplectic and orthogonal case (resp. unitary case). This is closely related to the breakdown of (\ref{eq: DS unitary}),(\ref{eq: DS symplectic}),(\ref{eq: DS orthogonal}) beyond the corresponding Gaussian ranges.

Rudnick and Waxman \cite[\S 5]{RuWa19} studied a narrow bandwidth sampling regime, which we now describe. We focus on the case $G=\USp(2n)$. Let $f\in C^{\infty}(\R/\Z)$ be a fixed even and real-valued test function. For a matrix $U\in\USp(2n)$ and $\nu\in\N$ define
$$W_{f,\nu}(U)=\sum_{k=1}^{2n}f(\theta_k)e(\nu\theta_k),$$
where $e(\theta_1),\ldots,e(\theta_{2n})$ are the eigenvalues of $U$. The Fourier coefficients of the function $f(t)e(\nu t)$ are concentrated around $\nu$ as $\nu$ becomes large. Note that $W_{f,\nu}(U)\in\R$ because the eigenvalues of $U$ come in conjugate pairs and $f$ is even and real-valued. Rudnick and Waxman \cite[Proposition 5.3]{RuWa19} computed the variance of $W_{f,\nu}(U)$ in the limit $n\to\infty$, where $f$ is fixed and $\nu$ grows linearly with $n$. Here we apply Theorem \ref{thm: main} to compute higher moments of $W_{f,\nu}(U)$ in a restricted range of $\nu$.

\begin{thm}\label{thm: waxman} Let $f\in C^{\infty}(\R/\Z)$ be a fixed even real-valued function, $m$ a fixed natural number, $\epsilon>0$ a fixed constant. Assume $\epsilon n\le \nu\le \frac{4n}m-n^{1/2}$. Then
$$\int_{\USp(2n)}W_{f,\nu}(U)^m\mathrm dU=\eta_m\cdot(m-1)!!\cdot\|f\|_{L^2}^{m}\nu^{m/2}+O_{f,m,\epsilon}\left(n^{(m-1)/2}\right),$$
where
$\|f\|_{L^2}=\left(\int_0^1f(t)^2\mathrm dt\right)^{1/2}$, $\eta_m$ is defined by (\ref{eq: def eta}) and the implicit constant may depend on $f,m,\epsilon$. Note that the main term is the $m$-th moment of the real Gaussian with mean 0 and variance $\|f\|_{L^2}^2\nu$.
\end{thm}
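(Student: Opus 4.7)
The plan is to Fourier expand $W_{f,\nu}(U)$ in powers of $U$, raise to the $m$-th power, and apply Theorem \ref{thm: main} to each resulting symplectic trace moment.

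Since $f(t)e(\nu t)=\sum_{j\in\Z}\hat f(j-\nu)e(jt)$, and since $\tr(U^j)=\tr(U^{-j})$ for $U\in\USp(2n)$ while $\hat f$ is real and even (as $f$ is), one obtains
$$W_{f,\nu}(U)=2n\hat f(\nu)+\sum_{j\ge 1}\alpha_j\tr(U^j),\qquad \alpha_j:=\hat f(\nu-j)+\hat f(\nu+j).$$
The constant term is $O_{f,A,\epsilon}(n^{1-A})$ by the smoothness of $f$, hence negligible for every $A>0$. Expanding the $m$-th power and using the rapid decay of $\hat f$ to truncate to tuples with $|j_i-\nu|\le n^{1/2}/(2m)$ at the cost of an $O_A(n^{-A})$ tail, we find $\sum_i j_i\le m\nu+n^{1/2}/2\le 4n+1$, so the hypothesis of Theorem \ref{thm: main} is satisfied and
$$\int_{\USp(2n)}W_{f,\nu}(U)^m\,\mathrm dU=\sum_{\vec j}\alpha_{j_1}\cdots\alpha_{j_m}\,M(\USp(2n),\mathbf a(\vec j))+O_A(n^{-A}),$$
where $\mathbf a(\vec j)$ is the multiplicity vector of $\vec j=(j_1,\ldots,j_m)$.

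Substituting (\ref{eq: thm1}) splits the right-hand side into a diagonal $\mathbf b=\mathbf a$ piece (where $\phi(n,0)=1$) and off-diagonal pieces requiring $|\mathbf a-\mathbf b|$ even and at least $2n+2$. The diagonal piece equals, via the Diaconis--Shahshahani identity (\ref{eq: DS symplectic}) reinterpreted over ordered tuples,
$$\mathbb E\left[\left(\sum_{j\ge 1}\alpha_j(j^{1/2}X_j-\eta_j)\right)^{m}\right].$$
The bracketed random variable is Gaussian with mean $-\sum_j\eta_j\alpha_j=O_f(1)$ (by Poisson summation applied to $f$ along the even/odd residue class of $\nu$) and variance $\sum_j j\alpha_j^2=\nu\|f\|_{L^2}^2+O_f(1)$ (a short Parseval calculation after the substitution $k=\nu-j$, discarding the smaller contributions from $\hat f(\nu+j)$). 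Its $m$-th moment is therefore $\eta_m(m-1)!!\|f\|_{L^2}^m\nu^{m/2}+O_{f,m,\epsilon}(n^{(m-1)/2})$, yielding the stated main term.

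The main obstacle is showing that the off-diagonal contributions are absorbed in $O_{f,m,\epsilon}(n^{(m-1)/2})$. After truncation, each $j_i$ lies in $[\epsilon n/2,\,4n/m+1]$, so $|\mathbf a-\mathbf b|\ge 2n+2$ combined with $|\mathbf a-\mathbf b|\le\ell(\mathbf a-\mathbf b)\cdot(4n/m+1)$ forces $\ell(\mathbf a-\mathbf b)\ge m/2$, whence $\ell(\mathbf b)\le m/2$ and $|\mathbf b|\le 2n$. I plan to combine a direct bound on $\phi(n,\mathbf a-\mathbf b)$ from its definition (\ref{eq: def phi}), the estimate $g(\mathbf b)\ll_m\nu^{\ell(\mathbf b)/2}$, the multinomial factor $\binom{\mathbf a}{\mathbf b}$ counting ordered realizations of a prescribed multiplicity structure, and the rapid decay of $\alpha_{j_i}$, to bound the total off-diagonal sum by $O_{f,m,\epsilon}(n^{(m-1)/2})$. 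Making these estimates sharp uniformly as $|\mathbf a-\mathbf b|$ ranges over $[2n+2,4n+1]$---especially controlling $\phi(n,\mathbf c)$ near the upper endpoint, where the alternating binomial sum defining it is largest---is the core technical step of the proof.
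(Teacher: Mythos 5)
Your proposal is correct and follows essentially the same route as the paper: Fourier-expand $W_{f,\nu}$, truncate using the rapid decay of $\hat f$ so that Theorem \ref{thm: main} applies, read off the main term from the diagonal $\mathbf b=\mathbf a$ contribution (your reformulation of this as the $m$-th moment of the one-dimensional Gaussian $\sum_j\alpha_j(j^{1/2}X_j-\eta_j)$ is a tidy repackaging of the paper's explicit multinomial computation), and absorb everything else into the error. One correction to your closing paragraph: the off-diagonal estimate you describe as ``the core technical step'' is in fact immediate, because $|\phi(n,\mathbf c)|\le\sum_{\mathbf d\le\mathbf c}\binom{\mathbf c}{\mathbf d}=2^{\ell(\mathbf c)}\le 2^m$ uniformly in $n$ and $|\mathbf c|$ (there is no growth of the alternating sum ``near the upper endpoint'' to control), the crude bound $\ell(\mathbf b)\le m-1$ together with $j\ll n$ already gives $g(\mathbf b)\ll_m n^{(m-1)/2}$, the number of $\mathbf b\le\mathbf a$ and the factors $\binom{\mathbf a}{\mathbf b}$ are $O_m(1)$, and the weights $\alpha_{j_1}\cdots\alpha_{j_m}$ are absolutely summable --- which is exactly how the paper disposes of these terms in Lemma \ref{lem: waxman}.
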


The derivation of Theorem \ref{thm: waxman} from Theorem \ref{thm: main} will be given in \S\ref{sec: waxman}.

\begin{rem}
    The assertion of Theorem \ref{thm: waxman} does not hold beyond the range $\nu<4n/m$. For example if $m=2$ the correct main term is $\|f\|_{L^2}^2\min(\nu,2n)$ by \cite[Proposition 5.3]{RuWa19}. It is an interesting question whether the moments remain (approximately) Gaussian with variance $\|f\|_{L^2}^2\min(\nu,2n)$ beyond the range $\nu<4n/m$.
\end{rem}

\begin{rem} Using \cite[Theorem 3(ii)]{HuRu03} in place of Theorem \ref{thm: main} one can derive a version of Theorem \ref{thm: waxman} with $\SO(2n)$ in place of $\USp(2n)$ (the RHS remains the same) in the narrower range $\epsilon n\le \nu\le\frac{2n}m-n^{1/2}$. Versions for $\SO(2n+1)$ and $U(n)$ can also be obtained in suitable ranges.\end{rem}

{\bf Acknowledgments.} We would like to thank Ze\'ev Rudnick for suggesting the application to linear statistics of eigenvalues. The first author was partially supported by Israel Science Foundation grant no. 2507/19.

\section{Hyperelliptic $L$-functions and equidistribution}
\label{sec: hyperelliptic}

Let $q$ be an odd prime power. We will work in the ring of polynomials $\F_q[x]$, which has many parallels with $\Z$. For an introduction to number theory in $\F_q[x]$ see \cite{Ros02}. We will use $P, P_i, P_{ji}$ etc. to denote monic primes of $\F_q[x]$ and notation such as $\sum_P$ will always denote summation over monic primes.
Similarly, we will use $Q,Q_i,Q_{ji}$ etc. to denote monic prime powers in $\F_q[x]$, and sums such as $\sum_Q$ will always denote summation over monic prime powers. Moreover, $P_{ji}$ (resp. $Q_{ji}$) will always denote a prime (resp. prime power) of degree $j$. We denote by $\left(\frac AB\right)$ the Jacobi symbol and by
$$\Lambda(Q)=\left[\begin{array}{ll}\deg P,&Q=P^e,\\0,&Q\mbox{ not a prime power}.\end{array}\right.$$
the von Mandoldt function. 

For $n\in\N$ let
\begin{equation}\label{eq: def H}\mathcal H_{2n+1}=\{h\in\F_q[x]\mbox{ monic squarefree},\,\deg h=2n+1\}.\end{equation} We have (\cite[Proposition 2.3]{Ros02})
\begin{equation}\label{eq: num sqfree}|\mathcal H_{2n+1}|=q^{2n+1}(1-q^{-1}).\end{equation}

A polynomial $h\in\mathcal H_{2n+1}$ defines a hyperelliptic curve $C_h$ with affine model given by $y^2=h(x)$. With it one associates an $L$-function
\begin{equation}\label{eq: def L}L(z,C_h)=\sum_{j=1}^\infty\sum_{F\,\mathrm{monic}\atop{\deg F=j}}\left(\frac hF\right)z^j=\prod_P\left(1-\left(\frac hP\right)z\right)^{-1},\end{equation}
which is a polynomial with integer coefficients of degree $2n$. Moreover by the Riemann Hypothesis for curves over finite fields
\begin{equation}\label{eq: L roots}L(z,C_h)=\prod_{i=1}^{2n}(1-\alpha_{h,i}z),\quad |\alpha_{h,i}|=q^{1/2}.\end{equation}
Taking $\frac{\mathrm d}{\mathrm dz}\log$ of the RHS of (\ref{eq: def L}) and (\ref{eq: L roots}) and comparing coefficients at $z^j$ gives the \emph{Weil explicit formula}
\begin{equation}\label{eq: weil exp}\sum_{i=1}^{2n}\alpha_{h,i}^j=-\sum_{\deg Q=j}\Lambda(Q)\left(\frac hQ\right).\end{equation}

One also associates with $C_h$ the \emph{Frobenius class} $\Theta_h$, which is a conjugacy class in $\USp(2n)$ with eigenvalues $q^{-1/2}\alpha_{h,1},\ldots,q^{-1/2}\alpha_{h,2n}$ (see \cite[9.2.4, 10.1.18.3, 9.1.16]{KaSa99}). 
A deep equidistribution theorem due to Katz and Sarnak says that for fixed $n$ and $q\to\infty$, the classes $\Theta_h,\,h\in\mathcal H_{2n+1}$ become equidistributed in the space of conjugacy classes of $\USp(2n)$.

\begin{thm}[Katz-Sarnak]\label{thm: katz-sarnak} Let $f$ be a continuous class function on $\USp(2n)$. Then $$\frac 1{|\mathcal H_{2n+1}|}\sum_{h\in\mathcal H_{2n+1}}f(\Theta_h)=\int_{\USp(2n)}f(U)\mathrm d U+O_n(q^{-1/2})$$
(here the implicit constant depends on $n$).
\end{thm}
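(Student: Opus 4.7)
The statement is a citation-type theorem from Katz--Sarnak, so my plan is to explain the overall strategy rather than reconstruct their argument in full. The idea rests on two pillars: a \emph{monodromy computation} identifying the geometric monodromy group of the family $\{C_h\}_{h\in\mathcal H_{2n+1}}$ with the full symplectic group $\mathrm{Sp}(2n)$, and Deligne's \emph{equidistribution theorem}, which converts a large monodromy statement into equidistribution of Frobenius conjugacy classes as $q\to\infty$.

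First I would realize $\mathcal H_{2n+1}$ as the $\F_q$-points of the open subvariety of affine $(2n+1)$-space consisting of squarefree monic polynomials of degree $2n+1$ (complement of the discriminant locus). Over this base, the relative first $\ell$-adic cohomology $R^1\pi_*\Q_\ell$ of the universal hyperelliptic curve $y^2=h(x)$, equipped with the Weil cup-product pairing, gives a lisse self-dual symplectic $\Q_\ell$-sheaf of rank $2n$. The Frobenius conjugacy classes of this local system at points $h$ are precisely the $\Theta_h$ that appear in the statement, via the identification (\ref{eq: L roots}) of the reciprocal roots of $L(z,C_h)$ with the Frobenius eigenvalues.

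Next I would invoke the monodromy calculation of Katz--Sarnak showing that the geometric monodromy group of this local system on the squarefree locus is the \emph{full} symplectic group $\mathrm{Sp}(2n)$. The strategy here is to reduce to the case of a single Lefschetz pencil/one-parameter degeneration, verify irreducibility and that the image is Zariski-dense, and use Kazhdan--Margulis type arguments together with the presence of transvections coming from vanishing cycles at the discriminant boundary.

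Finally, with monodromy equal to $\mathrm{Sp}(2n)$, Deligne's equidistribution theorem (a corollary of his proof of the Riemann Hypothesis over finite fields) implies that for any continuous class function $f$ on the maximal compact subgroup $\USp(2n)$ one has
\[
\frac 1{|\mathcal H_{2n+1}|}\sum_{h\in\mathcal H_{2n+1}}f(\Theta_h)=\int_{\USp(2n)}f(U)\,\mathrm dU+O_n(q^{-1/2}),
\]
with the $q^{-1/2}$ saving furnished by Deligne's bounds on sums of trace functions of pure sheaves. The implicit constant depends on $n$ through the rank of the sheaf, the dimension of the base and the complexity (Betti numbers/sum of Swan conductors) of the auxiliary sheaves used to express $f$ via representation-theoretic expansions (e.g.\ applying the bound coefficient-by-coefficient after expanding $f$ in characters of $\USp(2n)$). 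The main obstacle, and the deep content of the theorem, is the monodromy computation; once that is in hand Deligne's machine yields the equidistribution essentially automatically.
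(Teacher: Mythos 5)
Your outline matches the paper's proof, which is simply the citation ``Combine \cite[Theorem 9.6.10]{KaSa99} with \cite[Theorem 10.1.18.3]{KaSa99}'': the first of these is exactly the Deligne-style equidistribution criterion you describe, and the second is the computation that the geometric monodromy of the hyperelliptic family on the squarefree locus is the full symplectic group. So your two pillars are precisely the two results the authors invoke, and the sketch is correct at the level of detail one would expect for a quoted theorem.
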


\begin{proof} Combine \cite[Theorem 9.6.10]{KaSa99} with \cite[Theorem 10.1.18.3]{KaSa99}.\end{proof}

\begin{conv} Throughout \S\ref{sec: hyperelliptic}-\ref{sec: final}, $n\in\N,\,\mathbf a\in\mathcal P$ are fixed and all asymptotic notation has implicit constants which may depend on $n,\mathbf a$ and possibly other parameters explicitly fixed, but not on $q$.\end{conv}

\begin{prop}\label{prop_explicit_formula_means_of_traces} Let $\mathbf a$ be supported on $[1,k]$. Then \begin{multline*}(-1)^{\ell(\mathbf a)}q^{-2n-1-|\mathbf a|/2}\sum_{h\in\mathcal H_{2n+1}}\sum_{(Q_{ji}:\,1\le j\le k,\,1\le i\le a_j)\atop{\deg Q_{ji}=j}}\prod_{1\le j\le k\atop{1\le i\le a_j}}\Lambda(Q_{ji})\left(\frac h{Q_{ji}}\right)=\int_{\USp(2n)}\prod_{j=1}^\infty\tr(U^j)^{a_j}\mathrm d U+O(q^{-1/2}).\end{multline*}
Here (and elsewhere where similar notation appears) the internal sum on the LHS is over all choices of (monic) prime powers $Q_{ji},\,1\le j\le k,1\le i\le a_j$ with $\deg Q_{ji}=j$.
\end{prop}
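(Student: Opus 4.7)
The proposal is to obtain the identity by combining the Weil explicit formula with the Katz-Sarnak equidistribution theorem applied to the class function $f(U)=\prod_j\tr(U^j)^{a_j}$.

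First, I would use (\ref{eq: weil exp}): since $\Theta_h\in\USp(2n)$ has eigenvalues $q^{-1/2}\alpha_{h,i}$, one has
$$\tr(\Theta_h^j)=q^{-j/2}\sum_{i=1}^{2n}\alpha_{h,i}^j=-q^{-j/2}\sum_{\deg Q=j}\Lambda(Q)\left(\frac hQ\right).$$
Raising this to the $a_j$ power and multiplying over $j$, the sign factor becomes $(-1)^{\sum_j a_j}=(-1)^{\ell(\mathbf a)}$, the $q$-power becomes $q^{-\sum_j ja_j/2}=q^{-|\mathbf a|/2}$, and the product of sums expands, by distributivity, into precisely the multiple sum over tuples $(Q_{ji})_{1\le j\le k,\,1\le i\le a_j}$ with $\deg Q_{ji}=j$ appearing in the statement. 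Thus
$$\prod_{j=1}^\infty\tr(\Theta_h^j)^{a_j}=(-1)^{\ell(\mathbf a)}q^{-|\mathbf a|/2}\sum_{(Q_{ji})}\prod_{j,i}\Lambda(Q_{ji})\left(\frac h{Q_{ji}}\right),$$
so the left-hand side of the claimed identity equals $q^{-2n-1}\sum_{h\in\mathcal H_{2n+1}}\prod_j\tr(\Theta_h^j)^{a_j}$.

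Next I would average and apply Katz-Sarnak. Using (\ref{eq: num sqfree}),
$$q^{-2n-1}\sum_{h\in\mathcal H_{2n+1}}\prod_j\tr(\Theta_h^j)^{a_j}=(1-q^{-1})\cdot\frac1{|\mathcal H_{2n+1}|}\sum_{h\in\mathcal H_{2n+1}}f(\Theta_h),$$
where $f(U)=\prod_j\tr(U^j)^{a_j}$ is a continuous (polynomial) class function on $\USp(2n)$. By Theorem \ref{thm: katz-sarnak},
$$\frac1{|\mathcal H_{2n+1}|}\sum_{h\in\mathcal H_{2n+1}}f(\Theta_h)=\int_{\USp(2n)}\prod_{j=1}^\infty\tr(U^j)^{a_j}\mathrm dU+O(q^{-1/2}).$$
Finally, since $|\tr(U^j)|\le 2n$ for $U\in\USp(2n)$, the averaged quantity is $O(1)$, so the $(1-q^{-1})$ prefactor introduces only an additional $O(q^{-1})$ error, which is absorbed into $O(q^{-1/2})$.

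There is essentially no obstacle here: the proposition is a direct translation between the spectral and arithmetic sides via the explicit formula, followed by invoking the black-box equidistribution statement. The only minor bookkeeping is tracking the sign $(-1)^{\ell(\mathbf a)}$ and the normalizing power of $q$, and noting that the mismatch between $q^{2n+1}$ and $|\mathcal H_{2n+1}|$ contributes only a lower-order term.
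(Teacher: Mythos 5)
Your proof is correct and follows essentially the same route as the paper: apply the Weil explicit formula to each $\tr(\Theta_h^j)$, expand the product into the multiple sum over prime powers, average over $h\in\mathcal H_{2n+1}$, and invoke the Katz--Sarnak equidistribution theorem for the class function $\prod_j\tr(U^j)^{a_j}$. Your explicit handling of the $(1-q^{-1})$ discrepancy between $q^{2n+1}$ and $|\mathcal H_{2n+1}|$ is a detail the paper leaves implicit, and it is handled correctly.
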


\begin{proof}
    Let $\alpha_{h,1},\ldots,\alpha_{h,2n}$ be the inverse zeros of $L(z,C_h)$. Recall that $q^{-1/2}\alpha_{h,i}$ are the eigenvalues of $\Theta_h$. By the explicit formula (\ref{eq: weil exp}) we have 

    \begin{equation}\label{eqn_weil_explicit_formula}
    q^{j/2}\tr(\Theta_h^j)=\sum_{i=1}^{2n}\alpha_{h,i}^j=-\sum_{\deg Q=j}\Lambda(Q)\left(\frac hQ\right)   
    \end{equation}
     for any $j\in\N$. Raising (\ref{eqn_weil_explicit_formula}) to the $a_j$-th power, taking the product over $j$ and expanding the RHS, we obtain (recalling that $\mathbf a$ is supported on $[0,k]$)
    $$\prod_{j=1}^\infty\tr(\Theta_h^j)^{a_j}=\prod_{j=1}^k\tr(\Theta_h^j)^{a_j}=(-1)^{\ell(\mathbf a)}q^{-|\mathbf a|/2}\sum_{(Q_{ji}:\,1\le j\le k,\,1\le i\le a_j)\atop{\deg Q_{ji}=j}}\prod_{1\le j\le k\atop{1\le i\le a_j}}\Lambda(Q_{ji})\left(\frac h{Q_{ji}}\right).$$
    Summing over $h\in\mathcal  H_{2n+1}$ and using Theorem \ref{thm: katz-sarnak} with $f(U)=\prod_{j=1}^\infty\tr(U^j)^{a_j}$ and (\ref{eq: num sqfree}) we obtain the assertion of the proposition.
    
\end{proof}

Taking $q\to\infty$, Proposition \ref{prop_explicit_formula_means_of_traces} reduces Theorem \ref{thm: main} to the next

\begin{prop}\label{prop: main}Let $\mathbf a$ be supported on $[1,k]$. Then
\begin{equation}\label{eq: prop main}q^{-2n-1-|\mathbf a|/2}\sum_{h\in\mathcal H_{2n+1}}\sum_{(Q_{ji}:\,1\le j\le k,\,1\le i\le a_j)\atop{\deg Q_{ji}=j}}\prod_{1\le j\le k\atop{1\le i\le a_j}}\Lambda(Q_{ji})\left(\frac h{Q_{ji}}\right)=\sum_{\mathbf b\le\mathbf a}{\mathbf a\choose\mathbf b}g(\mathbf b)\phi(n,\mathbf a-\mathbf b)+O(q^{-1/2}).\end{equation}\end{prop}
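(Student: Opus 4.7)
The plan is to swap the order of summation to isolate an inner character sum over $h$. Writing each $Q_{ji} = P_{ji}^{e_{ji}}$ and grouping the indices $(j,i)$ according to which prime $P_{ji}$ they share, for squarefree $h$ coprime to $\prod P_{ji}$ the product $\prod_{j,i}\left(\frac h{Q_{ji}}\right)$ reduces to $\left(\frac hE\right)$, where $E$ is the squarefree product of those primes appearing with odd total exponent across their group. The outer sum over prime-power tuples then decomposes into a sum over such coincidence patterns, and the core of the problem becomes evaluating $\sum_{h \in \mathcal H_{2n+1}}\left(\frac hE\right)$ for each pattern.

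I would first extract the Gaussian main term from patterns in which every group is \emph{even}, so that $\left(\frac{\cdot}{E}\right)$ becomes the principal character modulo $\prod Q_{ji}$. The inner sum then counts squarefree monic $h$ of degree $2n+1$ coprime to $\prod P_{ji}$, yielding $|\mathcal H_{2n+1}|(1+O_n(q^{-1/2}))$. Summing over the coincidence pattern with the $\Lambda(Q_{ji})$ weights, standard asymptotics like $\sum_{\deg P=m}\Lambda(P)^2 = m q^m + O(q^{m/2})$ and $\sum_{\deg P=m}\Lambda(P^2)^2 = (m/2)^2 q^{m}+O(q^{m/2})$ reproduce precisely the local factors $g_j$ defined in \eqref{eq: def g}; the multinomial $\binom{\mathbf a}{\mathbf b}$ arises from choosing which $b_j$ of the $a_j$ indices at each level go into the paired (Gaussian) configurations, while the remaining $\mathbf a - \mathbf b$ indices are earmarked to produce the non-Gaussian factor.

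For the non-Gaussian contribution, when at least one group is odd, $\chi_E = \left(\frac{\cdot}{E}\right)$ is a non-trivial primitive quadratic character modulo $E$, and $L(z,\chi_E)$ is a polynomial of degree $\deg E - 1$ satisfying a functional equation relating $[z^m]L$ to $[z^{\deg E - 1 - m}]L$. Applying Möbius inversion on the squarefreeness condition gives
\begin{equation*}
\sum_{h \in \mathcal H_{2n+1}}\left(\tfrac hE\right) = \sum_{\substack{f \text{ monic},\, (f,E)=1 \\ 2\deg f \le 2n+1}} \mu(f)\, [z^{2n+1 - 2\deg f}] L(z, \chi_E),
\end{equation*}
and under the hypothesis $|\mathbf a| \le 4n+1$ we have $\deg E \le 4n+1$, so every surviving coefficient $[z^m]L$ lies past the center of $L$ and can be reflected to a lower-degree coefficient via the functional equation. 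After reorganization, the Möbius variable $f$ (refactored by its prime decomposition) plays the role of $\mathbf d$ in the definition of $\phi$: the exponent $2\deg f \ge 2n+2 - \deg E$ coming from the non-vanishing range of $L$ translates into the constraint $|\mathbf d| \le |\mathbf c|/2 - n - 1$ in \eqref{eq: def phi}, while the sign $(-1)^{\ell(\mathbf d)}$ arises from $\mu(f)$ together with the sign factor $\epsilon_E$ delivered by function-field quadratic reciprocity.

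The principal obstacle will be the combinatorial bookkeeping: matching the output of Möbius inversion and the functional equation exactly to the formula for $\phi(n, \mathbf c)$, including the correct signs from quadratic reciprocity in $\F_q[x]$ and the verification that $\phi(n,\mathbf c) = 0$ emerges naturally whenever $|\mathbf c|$ is odd (reflecting that $L(z,\chi_E)$ with $\deg E$ odd is forced to vanish at the relevant symmetric point). A secondary difficulty is controlling the several error terms — from collisions of primes between distinct blocks, from secondary terms in the prime polynomial theorem, and from the Katz--Sarnak remainder implicit in Proposition~\ref{prop_explicit_formula_means_of_traces} — so that they all fit into $O(q^{-1/2})$; for this the Riemann Hypothesis bound $|\alpha_{h,i}| = q^{1/2}$ applied to $L(z,\chi_E)$ is crucial.
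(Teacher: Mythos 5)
Your proposal is correct in outline and follows essentially the same route as the paper: decompose the prime-power tuples according to which indices share a prime, let the fully-paired (even) blocks produce $g(\mathbf b)$ and the remaining distinct primes produce $\phi(n,\mathbf a-\mathbf b)$ via M\"obius inversion over the squarefree condition, the functional equation of $L(z,\chi_E)$ and quadratic reciprocity, with $\binom{\mathbf a}{\mathbf b}$ counting the splittings and all higher-multiplicity collisions absorbed into the error term under $|\mathbf a|\le 4n+1$. The only differences are organizational: the paper runs an inclusion--exclusion over subsets $J\subset A$ rather than a direct partition by coincidence pattern, and it outsources the distinct-prime character-sum evaluation (your functional-equation step) to the cited estimates of Entin--Roditty-Gershon--Rudnick via Proposition \ref{prop_S_sum_estimate}, which is exactly the computation you sketch.
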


Sections \ref{sec: distinct primes}-\ref{sec: final} are dedicated to proving Proposition \ref{prop: main}. In \S\ref{sec: distinct primes} we analyze the contribution of sets of distinct primes to the LHS of (\ref{eq: prop main}), which will give the $\phi(n,\mathbf a-\mathbf b)$ term. In \S\ref{sec: squares} we analyze the contribution of prime squares and pairs of equal primes, which will give the $g(\mathbf b)$ term. Finally in \S\ref{sec: final} we put all the pieces together and establish (\ref{eq: prop main}).

\section{Multiple character sums over distinct primes}
\label{sec: distinct primes}

 Throughout this section $\mathbf a$ is supported on $[1,k]$, where $k$ is a fixed natural number. Define

\begin{equation}\label{eq: def T}
T(n;\mathbf a)=\sum_{h\,\mathrm{monic}\atop{\deg h=2n+1}}\sum_{(P_{ji}:\,1\le j\le k,\,1\le i\le a_j)\atop{\text {distinct primes}\atop{\deg P_{ji}=j}}}\prod_{1\le j\le k\atop{1\le i\le a_j}}\Lambda(P_{ji})\left(\frac h{P_{ji}}\right).
\end{equation}
These multiple character sums, initially studied in \cite{ERR13}, will be needed in order to estimate the LHS of Proposition \ref{prop: main}.

\begin{prop}\label{prop_S_sum_estimate}
Assume $|\mathbf a|\le 4n+1$. Then
\begin{equation*}
    q^{-2n-1-|\mathbf a|/2}T(n;\mathbf a)=\phi(n,\mathbf a)+O\left(q^{-1/2}\right),
\end{equation*}
where $\phi$ is defined by (\ref{eq: def phi}).

\end{prop}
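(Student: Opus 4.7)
The plan is to evaluate the multiple character sum $T(n;\mathbf{a})$ by rewriting the inner $h$-sum as a coefficient of a Dirichlet $L$-function, using the functional equation to shorten it, and then identifying the main term via inclusion--exclusion on the divisibility of an auxiliary square polynomial by the primes $P_{ji}$.

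First I would interchange the sums: for each tuple of distinct primes $(P_{ji})$ the polynomial $D = \prod P_{ji}$ is squarefree monic of degree $|\mathbf{a}|$, and the inner sum becomes $[u^{2n+1}]L(u,\chi_D)$, where $\chi_D(h) = (h/D)$ is the primitive quadratic Dirichlet character modulo $D$. A computation on $\F_q^*$ shows that $\chi_D$ is even iff $|\mathbf{a}|$ is even; correspondingly $L(u,\chi_D)$ is a polynomial of degree $|\mathbf{a}|-1$, containing a trivial factor $1-u$ in the even case. If $|\mathbf{a}|\le 2n+1$ this coefficient already vanishes for degree reasons, so $T(n;\mathbf{a})=0$, matching $\phi(n,\mathbf{a})=0$ in that range.

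For $2n+2 \le |\mathbf{a}|\le 4n+1$, I would apply the functional equation of $L(u,\chi_D)$ (via $L^*(u,\chi_D) = L(u,\chi_D)/(1-u)$ in the even case) to express $[u^{2n+1}]L$ in terms of the short coefficient $[u^{m'}]L$ with $m' = |\mathbf{a}|-2-2n \le 2n-1$, multiplied by an explicit prefactor of the form $W(\chi_D)\,q^{2n+1-(|\mathbf{a}|-1)/2}$ and at most one boundary contribution from the $(1-u)$ factor. Substituting back and reinterchanging sums, I would fix a monic $f$ of degree $m'$ on the outside. Quadratic reciprocity turns $(f/P_{ji})$ into $\pm(P_{ji}/f)$, after which the inner sum over $(P_{ji})$ factorizes (up to inclusion--exclusion on coinciding primes, which contributes lower-order terms) as $\prod_j \Phi_j(f)^{a_j}$ with $\Phi_j(f) = \sum_{\deg P = j}\Lambda(P)(P/f)$. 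This is the $j$-th von Mangoldt coefficient of the Dirichlet $L$-function mod $f$, so by the Riemann Hypothesis in the function-field setting $\Phi_j(f) = O(q^{j/2})$ unless $f$ is a perfect square, in which case the character is principal and $\Phi_j(f) = q^j + O(q^{j/2})$.

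The main term therefore comes only from squares $f = g^2$. Since such $f$ exist of degree $m'$ only when $|\mathbf{a}|$ is even, this already reproduces $\phi(n,\mathbf{a}) = 0$ in the odd case. In the even case I would write $(g^2/P_{ji}) = \mathbf{1}[P_{ji}\nmid g]$ and apply inclusion--exclusion over the subset $S \subseteq \{(j,i)\}$ of indices with $P_{ji}\mid g$; grouping by the subpartition $\mathbf{d}$ with $d_j = \#\{i:(j,i)\in S\}$, the sign $(-1)^{|S|}$ becomes $(-1)^{\ell(\mathbf{d})}$ and the number of ways to choose the positions gives $\binom{\mathbf{a}}{\mathbf{d}}$, while the requirement that the primes indexed by $S$ jointly divide $g$ of degree $m'/2$ forces $|\mathbf{d}|\le m'/2 = |\mathbf{a}|/2 - n -1$, producing exactly the truncated alternating sum in the definition of $\phi(n,\mathbf{a})$. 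All remaining contributions---non-square $f$, lower-order inclusion--exclusion on the distinctness of the $P_{ji}$, and the $(1-u)$ boundary terms---combine to $O(q^{-1/2})$ after the normalization $q^{-2n-1-|\mathbf{a}|/2}$. The main obstacle I expect is the final bookkeeping: verifying that the root number $W(\chi_D)$ and the boundary term of the even-case functional equation conspire with the two layers of inclusion--exclusion (distinctness of the $P_{ji}$ on one side, divisibility of $g$ on the other) to produce precisely the truncated binomial sum defining $\phi$, with the correct overall sign, rather than a shifted or differently weighted combinatorial expression.
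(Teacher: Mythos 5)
Your overall strategy is exactly the right one: the paper's own proof of this proposition is essentially a reduction of $T(n;\mathbf a)$ to the unweighted sum $S(n;\mathbf a)$ followed by a citation of the evaluation of $S$ in \cite[\S 4]{ERR13}, and that evaluation proceeds precisely as you outline --- swap the order of summation, recognize the $h$-sum as $[u^{2n+1}]L(u,\chi_D)$, kill the range $|\mathbf a|\le 2n+1$ by degree reasons, and for larger $|\mathbf a|$ apply the functional equation, quadratic reciprocity, and the Riemann Hypothesis for $L(u,(\cdot/f))$ to isolate the contribution of perfect squares $f=g^2$. So you are reconstructing the cited argument rather than finding a different route.

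There is, however, a genuine gap in the mechanism you propose for extracting $\phi(n,\mathbf a)$, and it sits exactly where you flag the ``final bookkeeping.'' If the main term really came from a single coefficient $[u^{m'}]L=\sum_{\deg f=m'}\chi_D(f)$ with $f=g^2$, $\deg g=m'/2$, then your inclusion--exclusion over the set $S$ of indices with $P_{ji}\mid g$ would weight the term indexed by $\mathbf d$ by $q^{m'/2-|\mathbf d|}$ (the number of admissible $g$), i.e.\ each $\mathbf d\neq\mathbf 0$ would be smaller than the $\mathbf d=\mathbf 0$ term by a factor $q^{-|\mathbf d|}$ and would be swallowed by the error term; you would obtain $\phi=\pm 1$ for all even $|\mathbf a|\ge 2n+2$. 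That is false: e.g.\ for $n=2$, $\mathbf a=(8,0,0,\ldots)$ the definition (\ref{eq: def phi}) gives $\phi(2,\mathbf a)=-\bigl(1-\binom{8}{1}\bigr)=7$. What actually happens is that the trivial factor $1-u$ forces $[u^{2n+1}]L=c_{2n+1}-c_{2n}$ with $c_j$ the coefficients of $L^*=L/(1-u)$, so after the functional equation one gets a combination of $c_{m'}$ and $c_{m'-1}$ (equivalently, of $\sum_{\deg f\le m'}\chi_D(f)$ and $\sum_{\deg f\le m'-1}\chi_D(f)$ with relative weight $q^{-1}$), and the naive leading contributions of the squares $g^2$ with $\deg g=m'/2$ and $\deg g=m'/2-1$ \emph{cancel} between these two boundary terms. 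The truncated alternating sum defining $\phi$ emerges only from this cascading cancellation, in which squares $g^2$ of every degree $0\le\deg g\le m'/2$ contribute at the top order $q^{2n+1+|\mathbf a|/2}$; in the example above the $7$ arises as $36-28-1$ from the second-order terms of the distinctness and coprimality counts. So the $(1-u)$ boundary term is not ``at most one boundary contribution'' to be absorbed into the error --- it is essential to the main term, and without tracking it your argument produces the wrong value of $\phi$. (This delicate step is carried out in \cite[Proposition 4.3]{ERR13}, which is why the paper simply invokes it.)
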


\begin{proof}

We have \begin{equation}\label{eq: TS}T(n;\mathbf a)=\left(\prod_{1\le j\le k}j^{a_i}\right)S(n;\mathbf a),\end{equation} where the sums
\begin{equation}\label{eq: def S}
S(n;\mathbf a)=\sum_{h\,\mathrm{monic}\atop{\deg h=2n+1}}\sum_{(P_{ji}:\,1\le j\le k,\,1\le i\le a_j)\atop{\text {distinct primes}\atop{\deg P_{ji}=j}}}\prod_{1\le j\le k\atop{1\le i\le a_j}}\left(\frac h{P_{ji}}\right)
\end{equation}
were estimated in \cite[\S 4]{ERR13} (the notation there is slightly different: our $S(n;\mathbf a)$ is written there as $S(2n+1;\vec r)$ where $\vec r$ is a vector with entries equal to $1\le j\le k$, each repeated $a_j$ times). We divide the proof into cases depending on the value of $|\mathbf a|$. 

For $|\mathbf a|=0$ we have $T(n;\mathbf a)=q^{2n+1}$ since each term in the external sum in (\ref{eq: def T}) consists of a single empty product (which equals 1 by convention). Since $\phi(n;\mathbf a)=1$ the assertion follows in this case.

For $0<|\mathbf a|\le 2n+1$, by \cite[Lemma 4.1]{ERR13} the sum $S(n;\mathbf a)$ (and therefore $T(n;\mathbf a)$) is zero, and so is $\phi(n,\mathbf a)$ because the defining sum in (\ref{eq: def phi}) is empty. The assertion follows in this case as well.

For $|\mathbf a|=2n+2$, by \cite[equation (4.5)]{ERR13}  we have 
$$
S(n;\mathbf a)=-q^{n}\prod_{j=1}^k\pi(j)^{a_j}+O\left( q^{3n+1}\right),
$$
where $\pi(j)$ denotes the number of monic primes in $\F_q[x]$ of degree $j$
(the cited equation as written gives an error term of $O\left(q^{3n+2}\right)$, but the argument actually saves another factor of $q$ in the error term). By the Prime Polynomial Theorem we have $\pi(j)=\frac{q^j}{j}+O(q^{j/2})$. Hence (using (\ref{eq: TS}))
$$
T(n;\mathbf a)=-q^{3n+2}+O\left(q^{3n+1}\right)$$
and $$q^{-2n-1-|\mathbf a|/2}T(n;\mathbf a)=q^{-3n-2}T(n;\mathbf a)=-1+O(q^{-1})=\phi(n;\mathbf a)+O(q^{-1})$$
(using (\ref{eq: def phi}) and noting that the only summand is with $\mathbf b=\mathbf 0$, the trivial partition).

In the remaining case $|\mathbf a|\ge 2n+3$, by  \cite[Proposition 4.3]{ERR13} we have
$$
S(n;\mathbf a)=\phi(n,\mathbf a)q^{2n+1}\prod_{j=1}^k{\left(\frac{\pi(j)}{q^{j/2}}\right)^{a_j}}+O\left(q^{|\mathbf a|}\right)=\phi(n;\mathbf a)q^{2n+1-|\mathbf a|/2}\prod_{j=1}^k\pi(j)^{a_j}+O\left(q^{|\mathbf a|}\right).
$$
Once again using (\ref{eq: TS}) and  $\pi(j)=\frac{q^j}{j}+O(q^{j/2})$ we obtain
$$
q^{-2n-1-|\mathbf a|}T(n;\mathbf a)=\phi(n,\mathbf a)\left(1+O(q^{-1/2})\right)+O\left(q^{|\mathbf a|/2-2n-1}\right)=\phi(n,\mathbf a)+O\left(q^{-1/2}\right),
$$
using the assumption $|\mathbf a|\le 4n+1$ in the last equality. This completes the proof in all cases.
\end{proof}

\begin{lem}[Weil bound]\label{prop_weil_bound}
    Let $h\in\F_q[x]$ be a monic polynomial of degree $2n+1$, $j\in\N$. Then
    $$
    \sum_{\deg P=j}\left(\frac{h}{P}\right)=O(q^{j/2}).
    $$
\end{lem}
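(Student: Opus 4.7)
The plan is to reduce to the squarefree case, where the bound follows from the Riemann Hypothesis for curves over finite fields already recalled in (\ref{eq: L roots}) together with the Weil explicit formula (\ref{eq: weil exp}). Write $h = s^2 h_0$ with $h_0$ squarefree. Since $\deg h = 2n+1$ is odd, $\deg h_0$ is also odd, hence $h_0$ is non-constant. Observe that $\left(\frac h P\right) = \left(\frac{h_0}P\right)$ whenever $P\nmid s$ and $\left(\frac h P\right)=0$ otherwise, so
\[
\sum_{\deg P = j}\left(\frac h P\right) = \sum_{\deg P = j}\left(\frac{h_0}P\right) + O(\deg s /j),
\]
where the error accounts for the $O(n)$ prime divisors of $s$ of degree $j$. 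Thus it suffices to bound the sum for a squarefree $h_0$ of odd degree at most $2n+1$.

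For this squarefree case, I would invoke the Weil explicit formula (\ref{eq: weil exp}) applied to $h_0$: if $h_0$ has degree $2n'+1\leq 2n+1$, then the $L$-function $L(z,C_{h_0})$ has degree $2n'$ with inverse zeros of absolute value $q^{1/2}$, giving
\[
\sum_{\deg Q = j}\Lambda(Q)\left(\frac{h_0}Q\right) = -\sum_{i=1}^{2n'}\alpha_{h_0,i}^j = O_n(q^{j/2}).
\]

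Finally, I need to pass from prime powers back to primes. Splitting the $Q$-sum by the exponent $e$ in $Q=P^e$ and using $\left(\frac{h_0}{P^e}\right)=\left(\frac{h_0}P\right)^e$ gives
\[
\sum_{\deg Q = j}\Lambda(Q)\left(\frac{h_0}Q\right) = j\sum_{\deg P = j}\left(\frac{h_0}P\right) + \sum_{\substack{e\mid j,\ e\geq 2}}\frac j e \sum_{\deg P = j/e}\left(\frac{h_0}P\right)^e.
\]
The terms with $e\geq 2$ are trivially bounded by $\sum_{e\geq 2}\frac j e \pi(j/e) = O(q^{j/2})$ by the Prime Polynomial Theorem, so dividing through by $j$ yields $\sum_{\deg P = j}\left(\frac{h_0}P\right) = O(q^{j/2})$, and reinstating the correction from the non-squarefree reduction completes the proof.

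The only minor subtlety—hardly an obstacle—is keeping track of the non-squarefree case; both the reduction step and the prime-power cleanup lose only $O(1)$ or $O(q^{j/2})$ respectively, so nothing more delicate than the Weil bound recalled in the paper is required.
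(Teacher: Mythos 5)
Your proof is correct and follows essentially the same route as the paper's: reduce to the squarefree part (losing only $O(1)$ from the primes dividing the square factor), apply the Riemann Hypothesis via the explicit formula \eqref{eq: weil exp} to the squarefree conductor, and strip off the higher prime powers, which contribute $O(q^{j/2})$ trivially. Your observation that $\deg h_0$ is odd (so $h_0\neq 1$ and the explicit formula genuinely applies) is a worthwhile detail the paper leaves implicit.
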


\begin{proof}
Write $h=h_1h_2^2$, where $h_1\neq 1$ is squarefree. Then 
$$
    \sum_{\deg P=j}\left(\frac{h}{P}\right)=
    \sum_{\substack{\deg P=j\\ P\nmid h_2}}\left(\frac{h_1}{P}\right)=\sum_{\substack{\deg P=j}}\left(\frac{h_1}{P}\right)+O(1),
$$
since the number of distinct prime divisors of $h_2$ is at most $\deg h=O(1)$. Now use the Weil explicit formula \eqref{eqn_weil_explicit_formula} with conductor $h_1$, noting that the number of prime powers of degree $k$ which are not primes is $O(q^{j/2})$. 
\end{proof}

We will need the following generalization of Lemma \ref{prop_weil_bound}.

\begin{lem}\label{lem: weil gen} Let $h\in\F_q[x]$ be a monic polynomial of degree $2n+1$, $r,j_1,\ldots,j_r$ fixed natural numbers. Then
$$\sum_{P_1,\ldots,P_r\atop{\mathrm{distinct}\atop{\deg P_l=j_l}}}\left(\frac h{P_1\cdots P_r}\right)=O\left(q^{\frac{j_1+\cdots+j_r}2}\right).$$
\end{lem}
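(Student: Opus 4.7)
The plan is to remove the distinctness constraint by inclusion--exclusion on coincidences among the primes, reducing to $r$-fold sums of Jacobi characters that factor into single-prime sums which can be estimated via Lemma \ref{prop_weil_bound} or the Prime Polynomial Theorem.

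More precisely, for each set partition $\pi=\{B_1,\ldots,B_s\}$ of $\{1,\ldots,r\}$ I would introduce the auxiliary sum $T_\pi$ of $\left(\frac{h}{P_1\cdots P_r}\right)$ over all $r$-tuples $(P_1,\ldots,P_r)$ of primes of degrees $j_1,\ldots,j_r$ subject only to the condition that $P_l = P_{l'}$ whenever $l,l'$ lie in a common block of $\pi$. The sum in the lemma corresponds to the partition into singletons, and standard Möbius inversion on the partition lattice expresses it as a signed linear combination $\sum_\pi c_\pi T_\pi$ with coefficients depending only on $r$. Since $r$ is fixed the number of partitions is bounded (by the Bell number $B_r$), so it suffices to show $T_\pi = O(q^{(j_1+\cdots+j_r)/2})$ for each fixed $\pi$.

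For such a $\pi$, writing $Q_i$ for the common value of $P_l$ with $l \in B_i$, the sum $T_\pi$ factors as
\[T_\pi = \prod_{i=1}^s \sum_{\deg Q_i = j^{(i)}} \left(\frac{h}{Q_i}\right)^{|B_i|},\]
where $j^{(i)}$ denotes the common value of $j_l$ for $l \in B_i$ (and $T_\pi = 0$ unless this is well-defined, which is consistent with the target bound). I would estimate each factor in two cases. When $|B_i|$ is odd, the character power agrees with $\left(\frac{h}{Q_i}\right)$ except at the $O(1)$ primes dividing $h$, so Lemma \ref{prop_weil_bound} yields $O(q^{j^{(i)}/2})$. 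When $|B_i|$ is even, the character power is the indicator of $Q_i \nmid h$, and the Prime Polynomial Theorem gives $O(q^{j^{(i)}})$. Comparing with the target block exponent $|B_i| j^{(i)}/2 = \sum_{l \in B_i} j_l/2$, and using $|B_i| \ge 1$ for odd blocks and $|B_i| \ge 2$ for even blocks, each factor is absorbed; multiplying over blocks gives the required bound on $T_\pi$.

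The argument is essentially routine; the only mildly subtle point is that squared characters lose their cancellation, so even-sized blocks only enjoy the weaker bound $q^{j^{(i)}}$ rather than $q^{j^{(i)}/2}$. This loss is exactly compensated by the doubled contribution that an even-sized block makes to the target exponent $|B_i| j^{(i)}/2$, which is the reason the bound of the lemma is tight.
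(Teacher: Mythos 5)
Your proof is correct, and it takes a somewhat different route from the paper's. The paper proves the lemma by induction on $r$: it writes the distinct $r$-fold sum as the distinct $(r-1)$-fold sum times the full single sum over $P_r$ (bounded by Lemma \ref{prop_weil_bound}), minus correction terms in which $P_r$ coincides with some $P_m$; in each correction term the squared character $\left(\frac{h}{P_m^2}\right)$ becomes trivial away from divisors of $h$, contributing a factor $O(q^{j_r})$ against the two merged degrees $j_m=j_r$, and the remaining primes are handled by the induction hypothesis. You instead perform a one-shot M\"obius inversion over the partition lattice and bound each auxiliary sum $T_\pi$ directly, which requires you to treat arbitrary coincidence multiplicities (odd versus even block sizes) rather than only pairwise coincidences. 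The key arithmetic point is identical in both arguments: an even power of the Jacobi symbol loses all cancellation and only gives $q^{j}$ rather than $q^{j/2}$, but this is exactly compensated because such a block contributes at least $2$ copies of $j$ to the target exponent. Your version is self-contained and avoids a small subtlety in the paper's recursion (the correction terms there still carry residual distinctness constraints relative to $P_m$, which the paper handles somewhat implicitly), at the cost of invoking the partition-lattice M\"obius function; since $r$ is fixed, the bounded number of partitions and bounded coefficients make this harmless. One cosmetic remark: for odd $|B_i|$ the identity $\left(\frac{h}{Q}\right)^{|B_i|}=\left(\frac{h}{Q}\right)$ holds at \emph{all} primes, including those dividing $h$ (where both sides vanish), so no exceptional set is needed there.
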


\begin{proof}
We prove the assertion by induction on $r$, the case $r=1$ being Lemma \ref{prop_weil_bound}. Assume that $r>1$ and that the assertion holds for $r-1$. We have (below a term marked with  $\,\widehat{}\,$ is skipped)

\begin{multline*}\sum_{P_1,\ldots,P_r\atop{\text{distinct}\atop{\deg P_l=j_l}}}\left(\frac h{P_1\cdots P_r}\right)=\sum_{P_1,\ldots,P_{r-1}\atop{\text{distinct}\atop{\deg P_l=j_l}}}\left(\frac h{P_1\cdots P_{r-1}}\right)\sum_{\deg P_r=j_r}\left(\frac h{P_r}\right)-\sum_{1\le m\le r-1\atop{j_m=j_r}}\sum_{P_1,\ldots,P_{r-1}\atop{\text{distinct}\atop{\deg P_l=j_l}}}\left(\frac h{P_1\cdots P_m^2\cdots P_{r-1}}\right)
\\=
\sum_{P_1,\ldots,P_{r-1}\atop{\text{distinct}\atop{\deg P_l=j_l}}}\left(\frac h{P_1\cdots P_{r-1}}\right)\sum_{\deg P_r=j_r}\left(\frac h{P_r}\right)-\sum_{1\le m\le r-1\atop{j_m=j_r}}\sum_{\deg P_m=j_r\atop{P_m\nmid h}}\sum_{P_1,\ldots,\hat{P}_m,\ldots,P_{r-1}\atop{\text{distinct}\atop{\deg P_l=j_l}}}\left(\frac h{P_1\cdots \widehat{P}_m\cdots P_{r-1}}\right)
\\
\ll q^{\frac{j_1+\ldots+ j_{r-1}}2}q^{\frac {j_r}2}+q^{j_r}q^{\frac{j_1+\ldots+j_{r-1}-j_r}2}\ll q^{\frac{j_1+\ldots+j_r}2},
\end{multline*}
the first inequality here being a consequence of the induction hypothesis and Lemma \ref{prop_weil_bound}.\end{proof}

In what follows we say that $Q$ is $\text{prime}^2$ to mean that $Q$ is the square of a prime. The following lemma allows us to restrict the sum on the LHS of Proposition \ref{prop: main} to a sum over primes and prime$^2$'s.

\begin{lem}\label{prop_not_prime_or_square_or_non_square_free_is_ngeligible}
    \begin{multline*}
    q^{-2n-1-|\mathbf a|/2}\sum_{h\in\mathcal{H}_{2n+1}}\sum_{(Q_{ji}:\,1\le j\le k,\,1\le i\le a_j)\atop{\deg Q_{ji}=j}}\prod_{1\le j\le k\atop{1\le i\le a_j}}\Lambda(Q_{ji})\left(\frac h{Q_{ji}}\right)=\\
    =q^{-2n-1-|\mathbf a|/2}\sum_{\deg h=2n+1\atop{\mathrm{monic}}}\sum_{(Q_{ji}:\,1\le j\le k,\,1\le i\le a_j)\atop{Q_{ji}\mathrm{\, prime\,or\, prime}^2\atop{\deg Q_{ji}=j}}}\prod_{1\le j\le k\atop{1\le i\le a_j}}\Lambda(Q_{ji})\left(\frac h{Q_{ji}}\right)+O(q^{-1}).
    \end{multline*}
\end{lem}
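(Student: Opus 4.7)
The plan is to decompose each $Q_{ji} = P_{ji}^{e_{ji}}$ and split the inner sum according to the set $S = \{(j,i):\, e_{ji} \ge 3\}$ of ``bad'' indices. Because the $Q_{ji}$'s vary independently, for each fixed $h$ the inner sum factorises as
\[
\sum_{(Q_{ji})}\prod_{(j,i)} \Lambda(Q_{ji})\!\left(\tfrac{h}{Q_{ji}}\right) = \prod_{(j,i)}\bigl(\Sigma_j^{\mathrm g}(h) + \Sigma_j^{\mathrm b}(h)\bigr) = \sum_{S}\prod_{(j,i)\in S}\Sigma_j^{\mathrm b}(h)\prod_{(j,i)\notin S}\Sigma_j^{\mathrm g}(h),
\]
where $\Sigma_j^{\mathrm g}(h) = \sum_{Q\,\mathrm{prime\,or\,prime}^2,\,\deg Q=j} \Lambda(Q)\!\left(\tfrac{h}{Q}\right)$ and $\Sigma_j^{\mathrm b}(h)$ is the analogous sum over $Q = P^e$ with $e \ge 3$. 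The $S = \emptyset$ term gives the restricted inner sum of the RHS. It remains to bound (i) the $S \neq \emptyset$ terms summed over $h \in \mathcal H_{2n+1}$, and (ii) the discrepancy between summing $h$ over $\mathcal H_{2n+1}$ versus all monic $h$ of degree $2n+1$ in the RHS.

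\textbf{Key estimates.} I would establish, uniformly in monic $h$ of degree $2n+1$,
\[
\Sigma_j^{\mathrm g}(h) = O(q^{j/2}) \quad\text{and}\quad \Sigma_j^{\mathrm b}(h) = O(q^{j/2-1}) \text{ for } j \ge 3.
\]
The good bound follows from Lemma \ref{prop_weil_bound} for the prime part and the trivial estimate $\tfrac{j}{2}\pi(j/2) + O(1) = O(q^{j/2})$ for the prime${}^2$ part (where the $O(1)$ accounts for primes of degree $j/2$ dividing $h$). For the bad bound I analyse each exponent $e \ge 3$ separately: for odd $e$, $\left(\tfrac{h}{P^e}\right) = \left(\tfrac{h}{P}\right)$ and Lemma \ref{prop_weil_bound} at degree $j/e$ yields $O(q^{j/(2e)})$; for even $e \ge 4$, $\left(\tfrac{h}{P^e}\right) \in \{0,1\}$ so the trivial count gives $O(\pi(j/e)) = O(q^{j/e})$. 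The maximum over $e \ge 3$ is $q^{j/4}$ (attained at $e = 4$, $4 \mid j$), and $q^{j/4} \le q^{j/2 - 1}$ for $j \ge 4$; for $j = 3$ only $e = 3$ is possible, contributing $O(q^{1/2}) = O(q^{j/2-1})$.

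\textbf{Assembly and obstacle.} For each nonempty $S$, summing trivially over $h \in \mathcal H_{2n+1}$ gives
\[
\Bigl|\sum_{h \in \mathcal H_{2n+1}} \prod_{(j,i) \in S}\Sigma_j^{\mathrm b}(h) \prod_{(j,i) \notin S}\Sigma_j^{\mathrm g}(h)\Bigr| \le q^{2n+1}\prod_{(j,i)\in S} O(q^{j/2-1}) \prod_{(j,i)\notin S} O(q^{j/2}) = O\!\left(q^{2n+1+|\mathbf a|/2 - |S|}\right),
\]
which is $O(q^{2n+|\mathbf a|/2})$ when $|S| \ge 1$; since there are only finitely many sets $S$ (depending only on $\mathbf a$), summing gives $O(q^{-1})$ after normalisation. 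For the discrepancy (ii), non-squarefree monic $h$ of degree $2n+1$ (write $h = h_1 h_2^2$ with $\deg h_2 \ge 1$) number $O(q^{2n})$, and each contributes at most $\prod_{j,i} O(q^{j/2}) = O(q^{|\mathbf a|/2})$ to the restricted inner sum by the good estimate, totalling $O(q^{2n+|\mathbf a|/2})$, again $O(q^{-1})$ after normalisation. The main obstacle is the sharpness of the bound $\Sigma_j^{\mathrm b}(h) = O(q^{j/2-1})$ at $(e,j) = (4,4)$, where $\left(\tfrac{h}{P^4}\right) = \mathbf{1}_{P \nmid h}$ so no character cancellation is available and the saving of a full factor of $q$ relative to $\Sigma_j^{\mathrm g}$ comes purely from the smallness of $\pi(1) = q$; any weaker bound, say $O(q^{j/2 - 1/2})$, would propagate into sets $S$ of size $1$ and yield only $O(q^{-1/2})$, insufficient for the claim.
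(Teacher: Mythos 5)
Your proposal is correct and follows essentially the same route as the paper: factor the inner sum over each index $(j,i)$, bound the contribution of exponents $e\ge 3$ by $O(q^{j/2-1})$, expand the product, and handle the non-squarefree $h$ separately via the trivial count $O(q^{2n})$ times the Weil bound. Your case split at even exponents $e\ge 4$ (where the symbol degenerates to $\mathbf{1}_{P\nmid h}$ and one must use the trivial count $O(q^{j/e})$ rather than the Weil bound) is in fact slightly more careful than the paper, which quotes $O(q^{j/2l})$ uniformly for all $l\ge 3$; your observation that the bound is tight at $(e,j)=(4,4)$ but still suffices is accurate and not a gap.
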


\begin{proof}
Let $h$ be monic of degree $2n+1$. We have 
$$
\sum_{(Q_{ji}:1\le j\le k,\,1\le i\le a_j)\atop{\deg Q_{ji}=j}}\prod_{1\le j\le k\atop{1\le i\le a_j}}\Lambda(Q_{ji})\left(\frac h{Q_{ji}}\right)=\prod_{1\le j\le k\atop{1\le i\le a_j}}\left(\sum_{\deg Q_{ji}=j}\Lambda(Q_{ji})\left(\frac{h}{Q_{ji}}\right)\right).
$$
From Lemma \ref{prop_weil_bound},
$$
\sum_{\deg Q_{ji}=j\atop{Q_{ji}\text{ not prime or prime}^2}}\Lambda(Q_{ji})\left(\frac{h}{Q_{ji}}\right)=\sum_{l=3\atop{l|j}}^\infty \sum_{Q_{ji}=f^l,\atop{f\text{ prime}}}\Lambda(Q_{ji})\left(\frac{h}{Q_{ji}}\right)=\sum_{l=3\atop{l|j}}^\infty O(q^{j/2l})=O(q^{j/2-1}).
$$
We obtain
\begin{equation}\label{eq: sig pi to pi sig with error}
\sum_{(Q_{ji}:\,1\le j\le k,\,1\le i\le a_j)\atop{\deg Q_{ji}=j}}\prod_{1\le j\le k\atop{1\le i\le a_j}}\Lambda(Q_{ji})\left(\frac h{\prod_{i,j}Q_{ji}}\right)=\prod_{\substack{1\le j\le k\\ 1\le i\le a_j}}\left(\sum_{\deg Q_{ji}=j\atop{Q_{ji}\text{ prime or prime}^2}}\Lambda(Q_{ji})\left(\frac{h}{Q_{ji}}\right)
+O(q^{j/2-1})
\right).
\end{equation}
Now we note that
\begin{enumerate}
    \item[(i)] $\sum_{\substack{\deg Q_{ji}=j\\ \text{ prime}}}\Lambda(Q_{ji})\left(\frac{h}{Q_{ji}}\right)=O(q^{j/2})$. This follows from Lemma \ref{prop_weil_bound}. 
    \item[(ii)] $\sum_{\substack{\deg Q_{ji}=j\\ \text{ prime}^2}}\Lambda(Q_{ji})\left(\frac{h}{Q_{ji}}\right)=O(q^{j/2})$. This is because the number of monic squares of degree $j$ is $O(q^{j/2})$. 
\end{enumerate}
Opening up the parentheses on the RHS of (\ref{eq: sig pi to pi sig with error}) and using (i-ii) we obtain
$$
\sum_{(Q_{ji}:\,1\le j\le k,\,1\le i\le a_j)\atop{\deg Q_{ji}=j}}\prod_{1\le j\le k\atop{1\le i\le a_j}}\Lambda(Q_{ji})\left(\frac h{\prod_{i,j}Q_{ji}}\right)=\sum_{(Q_{ji}:\,1\le j\le k,\,1\le i\le a_j)\atop{Q_{ji}\text{ prime or prime}^2\atop{\deg Q_{ji}=j}}}\prod_{1\le j\le k\atop{1\le i\le a_j}}\Lambda(Q_{ji})\left(\frac h{\prod_{i,j}Q_{ji}}\right)+O(q^{|\mathbf a|/2-1}).
$$
Summing over all $h$ monic of degree $2n+1$ we obtain 
\begin{multline}
\label{eq: restrict to prime and prime2}
q^{-2n-1-|\mathbf a|/2}\sum_{\deg h=2n+1\atop{\mathrm{monic}}}\sum_{(Q_{ji}:\,1\le j\le k,\,1\le i\le a_j)\atop{\deg Q_{ji}=j}}\prod_{1\le j\le k\atop{1\le i\le a_j}}\Lambda(Q_{ji})\left(\frac h{Q_{ji}}\right)
\\=
\sum_{\deg h=2n+1\atop{\mathrm{monic}}}
\sum_{(Q_{ji}:\,1\le j\le k,\,1\le i\le a_j)\atop{Q_{ji}\text{ prime or prime}^2\atop{\deg Q_{ji}=j}}}\prod_{1\le j\le k\atop{1\le i\le a_j}}\Lambda(Q_{ji})\left(\frac h{Q_{ji}}\right)+O(q^{-1}).
\end{multline}    

To complete the proof of the lemma we need to show that the LHS of (\ref{eq: restrict to prime and prime2}) changes by $O(q^{-1})$ if we restrict the summation to squarefree $h$. We note that the number of non-squarefree $h$ is $q^{2n}$ (by (\ref{eq: num sqfree})) and for a given $h$,    
    $$\sum_{(Q_{ji}:1\le j\le k,\,1\le i\le a_j)\atop{\deg Q_{ji}=j}}\prod_{1\le j\le k\atop{1\le i\le a_j}}\Lambda(Q_{ji})\left(\frac h{\prod_{i,j}Q_{ji}}\right)=O(q^{|\mathbf a|/2})$$
by Lemma \ref{prop_weil_bound}, so the contribution of non-squarefree $h$ is $O(q^{-1})$.
\end{proof}


\section{The contribution of squares}
\label{sec: squares}

We introduce some notational conventions that will be used in \S\ref{sec: squares}-\ref{sec: final}.

\begin{notation}\label{not: ji}\begin{enumerate}\item[(i)] In what follows the notation $ji$ (and similar) is a shorthand for the pair $(j,i)$.
\item[(ii)] In what follows by $Q_{ji}$ we always denote a monic prime or prime$^2$ in $\F_q[x]$ with $\deg Q_{ji}=j$. In particular summation over collections $(Q_{ji})$ will always mean summation over all suitable collections with $Q_{ji}$ satisfying the above requirements.
\end{enumerate}\end{notation}

\begin{definition}\label{def: pairing}
Let $\mathbf b=(b_j)_{j=1}^\infty\in\mathcal P$ be a partition.
A  \emph{pairing} of $\mathbf b$ is a permutation $\sigma$ of (the finite set) $\{ji:1\le j\le \infty,1\le i\le b_j\}$ such that $\sigma$ preserves the $j$-coordinate, $\sigma^2=\mathrm{id}$ and $\sigma$ has no fixed point $ji$ with odd $j$.

\end{definition}

Note that the set of indices appearing in the above definition enumerates the parts of $\mathbf b$ by taking the $(ji)$-th part to be $j$. A {pairing} can be viewed as a way to partially divide the parts of $\mathbf b$ into pairs of equal sizes, such that no odd part is left unpaired ($\sigma$ transposes each pair and fixes the unpaired parts).

\begin{lem}\label{prop_number_of_pairings}
Let $\mathbf b$ be a partition. For a pairing $\sigma$ of $\mathbf b$ denote by $b^{(\sigma)}_j=\frac 12|\{1\le i\le b_j:\sigma(ji)\neq ji\}|$ the number of pairs of $j$-parts induced by $\sigma$. Then \begin{equation}\label{eq: sum sigma}\sum_{\sigma}\prod_{j=1}^\infty j^{b^{(\sigma)}_j}=g(\mathbf b),\end{equation}
where the sum is over all pairings of $\mathbf b$ and $g(\mathbf b)$ is defined by (\ref{eq: def g}). 
\end{lem}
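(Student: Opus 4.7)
The plan is to observe that a pairing $\sigma$ of $\mathbf b$ decomposes naturally into independent pieces, one for each $j$. Indeed, since $\sigma$ preserves the $j$-coordinate, the restrictions $\sigma_j$ of $\sigma$ to $\{ji:1\le i\le b_j\}$ for each fixed $j$ are involutions of $\{1,\ldots,b_j\}$ (with $ji$ identified with $i$), each required to be fixed-point-free when $j$ is odd. Conversely any such collection $(\sigma_j)_j$ assembles into a pairing $\sigma$ of $\mathbf b$, and $b_j^{(\sigma)}$ equals the number of 2-cycles of $\sigma_j$. Hence the sum factors as
$$\sum_\sigma\prod_{j=1}^\infty j^{b_j^{(\sigma)}}=\prod_{j=1}^\infty\left(\sum_{\sigma_j}j^{b_j^{(\sigma_j)}}\right),$$
reducing the claim to showing $\sum_{\sigma_j}j^{b_j^{(\sigma_j)}}=g_j(b_j)$ for each $j$, where $\sigma_j$ ranges over involutions of $\{1,\ldots,b_j\}$ (fixed-point-free if $j$ is odd).

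The next step is a standard count of involutions by number of 2-cycles. First I would handle $j$ odd: here $\sigma_j$ must be fixed-point-free, which is impossible unless $b_j$ is even, giving $0$ when $2\nmid ja$, matching $g_j(b_j)$. When $b_j=2m$ is even the number of fixed-point-free involutions on $2m$ letters equals $(2m-1)!!$, and each contributes $j^m=j^{b_j/2}$, yielding $j^{b_j/2}(b_j-1)!!$ as required. For $j$ even I would stratify the involutions $\sigma_j$ by their number $l$ of 2-cycles: one chooses the $2l$ non-fixed indices in $\binom{b_j}{2l}$ ways and then a fixed-point-free involution on them in $(2l-1)!!$ ways, each contributing $j^l$, so
$$\sum_{\sigma_j}j^{b_j^{(\sigma_j)}}=\sum_{l=0}^{\infty}\binom{b_j}{2l}(2l-1)!!\,j^l=g_j(b_j),$$
completing the proof.

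There is no real obstacle: the key conceptual step is recognizing that a ``pairing'' of $\mathbf b$ is exactly the data of an involution on each block of equal parts (fixed-point-free when $j$ is odd), after which everything reduces to the well-known count of involutions by cycle type. The only subtlety is tracking the parity condition, i.e.\ that an odd $j$ forces $b_j$ even and contributes no fixed points, which is precisely how the three cases of the definition \eqref{eq: def g} of $g_j$ arise.
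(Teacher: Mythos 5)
Your proof is correct and follows essentially the same route as the paper: both reduce to a single $j$ via the multiplicativity coming from the fact that pairings preserve the $j$-coordinate, then count (partial) perfect matchings by $(b-1)!!$ and $\sum_l\binom{b}{2l}(2l-1)!!\,j^l$ in the odd and even cases respectively. Your explicit factorization of the sum over $\sigma$ into a product of sums over involutions $\sigma_j$ is just a slightly more detailed rendering of the paper's reduction step.
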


\begin{proof}
By the structure of pairings (namely that they preserve the $j$-coordinate) and the multiplicative definition of $g(\mathbf b)$, it suffices to prove the claim for the case of $\mathbf b=(0,\ldots,0, b,0,\ldots)$ supported on a single $j\in\N$. Now we note that the number of ways to divide a set of size $b$ into (unordered) pairs is $\frac{b!}{2^{b/2}(b/2)!}=(b-1)!!$ if $2|b$ and 0 otherwise. This proves the result for the case of $2\nmid j$ (because in this case all the parts of $\mathbf b$ must be paired and $b_j^{(\sigma)}=b/2$). 

In the case $2|j$, we need to choose a subset of even size $2l$ of $\{i:1\le i\le b\}$ and divide this subset into pairs, which can be done in $\sum_{l=0}^\infty {b\choose 2l}(2l-1)!!$ ways and gives a total contribution of $\sum_{l=0}^\infty {b\choose 2l}(2l-1)!!j^l=g_j(b)=g(\mathbf b)$ to (\ref{eq: sum sigma}) (here $b_j^{(\sigma)}=l$). This completes the proof.
\end{proof}

\begin{prop}\label{prop_sum_to_pair_up}
Let $\mathbf b$ be a partition supported on $[1,k]$. Then,

\begin{equation}\label{eq: contribution of squares} q^{-|\mathbf b|/2}\sum_{(Q_{ji}:\,1\le j\le k,\,1\le i\le b_j)\atop{\prod Q_{ji}=\square}}\prod_{1\le j\le k\atop{1\le i\le b_j}}\Lambda(Q_{ji})=g(\mathbf b)+O(q^{-1}),
\end{equation}
where $\prod Q_{ji}$ denotes the product over $1\le j\le k,1\le i\le b_j$.
\end{prop}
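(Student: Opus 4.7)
The plan is to group configurations in \eqref{eq: contribution of squares} according to pairings of $\mathbf b$ (Definition \ref{def: pairing}), evaluate the main term via Lemma \ref{prop_number_of_pairings} and the prime polynomial theorem $n\pi(n)=q^n+O(q^{n/2})$ (where $\pi(n)$ denotes the number of monic primes of degree $n$ in $\F_q[x]$), and bound the resulting overcounting by a short counting argument.

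Any configuration $(Q_{ji})$ with $\prod Q_{ji}=\square$ splits into $A=\{ji:Q_{ji}\text{ prime}\}$ and $B=\{ji:Q_{ji}\text{ prime}^2\}$ (forcing $j$ even on $B$); since each prime square contributes an even exponent, the square condition is equivalent to every prime appearing an even number of times among the $Q_{ji}$ for $ji\in A$. I call such a configuration \emph{generic} when each prime appears $0$ or $2$ times in $A$. A generic configuration corresponds bijectively to a pair $(\sigma,\text{prime assignment})$, where $\sigma$ is a pairing of $\mathbf b$ with fixed-point set $B$ and pairs matching same-prime positions in $A$, while the prime assignment picks a prime of degree $j$ for each pair at level $j$ and a prime of degree $j/2$ for each fixed point.

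For each pairing $\sigma$, summing $\prod\Lambda(Q_{ji})$ over all prime assignments (allowing coincidences) gives
$$M(\sigma)=\prod_{ji\in F_\sigma}\bigl((j/2)\pi(j/2)\bigr)\cdot\prod_{j=1}^\infty\bigl(j^2\pi(j)\bigr)^{b_j^{(\sigma)}}.$$
Thus $\sum_\sigma M(\sigma)$ counts each generic configuration exactly once, and counts a non-generic one (with $|\{ji\in A:Q_{ji}=P\}|=2m\ge 4$ for some $P$) with an extra multiplicative factor $(2m-1)!!$. Using $n\pi(n)=q^n+O(q^{n/2})$ (exact for $n=1$), each factor above equals its leading $q$-power times $1+O(q^{-1})$, and an exponent count shows $M(\sigma)=q^{|\mathbf b|/2}\prod_j j^{b_j^{(\sigma)}}\bigl(1+O(q^{-1})\bigr)$; Lemma \ref{prop_number_of_pairings} then yields $\sum_\sigma M(\sigma)=q^{|\mathbf b|/2}g(\mathbf b)+O(q^{|\mathbf b|/2-1})$.

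Finally, I would bound the contribution of non-generic configurations by $O(q^{|\mathbf b|/2-1})$. If some prime $P$ of degree $j$ fills $2m\ge 4$ positions at level $j$, then choosing these positions and the prime $P$ contributes $O(q^j\cdot j^{2m-1})$ to $\prod\Lambda$, while the remaining indices form a square-product configuration on the partition obtained by decreasing $b_j$ by $2m$, whose sum is $O(q^{(|\mathbf b|-2mj)/2})$ by the same pairing upper bound applied to the smaller partition. The total is $O(q^{|\mathbf b|/2-(m-1)j})=O(q^{|\mathbf b|/2-1})$ since $(m-1)j\ge 1$. Dividing by $q^{|\mathbf b|/2}$ yields the proposition. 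The main obstacle is the bookkeeping of multiplicities in the pairing-to-configuration correspondence; once that is settled, the main-term evaluation and the error bound are essentially routine applications of the prime polynomial theorem and Lemma \ref{prop_number_of_pairings}.
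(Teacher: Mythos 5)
Your proposal is correct and follows essentially the same route as the paper: both decompose the square-product configurations according to pairings of $\mathbf b$, evaluate each pairing's contribution via the prime polynomial theorem to get $q^{|\mathbf b|/2}\prod_j j^{b_j^{(\sigma)}}(1+O(q^{-1}))$, invoke Lemma \ref{prop_number_of_pairings} for the main term, and control the overcounting by observing that a configuration compatible with more than one pairing forces some prime to appear with multiplicity $\ge 4$, which saves a factor of $q$. The only cosmetic difference is how that last error is bounded (you factor off the high-multiplicity prime and reuse the pairing upper bound on the smaller partition, while the paper counts the possible values of $\prod Q_{ji}$ whose square root is non-squarefree), but the underlying idea is identical.
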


\begin{proof}
First note that if for some odd $j_1$ we have $2\nmid b_{j_1}$, then no suitable choice of the $Q_{ji}$ can make $\prod Q_{ji}$ a square (because the $Q_{j_1i}$ must be prime and the total multiplicity of primes of degree $j_1$ in the product is odd), in agreement with $g(\mathbf b)=0$ in this case. Thus we may assume that for every odd $j$ we have $2|b_j$. 

We say that a choice $(Q_{ji}:1\le j\le k,1\le i\le b_j)$ respects a pairing $\sigma$ of $\mathbf b$ if
\begin{enumerate}
    \item[(i)] Whenever $\sigma(ji_1)=ji_2\neq ji_1$ are paired we have $Q_{ji_1}=Q_{ji_2}=P$ for $P$ prime.
    \item[(ii)] If $\sigma(ji_1)=ji_1$ is not paired (by our definition of a pairing $j$ is even in this case), then $Q_{ji_1}=P^2$ for $P$ prime of degree $j/2$.
\end{enumerate}
If $(Q_{ji})$  respects a pairing $\sigma$ then $\prod Q_{ji}=\square$. Conversely it is easy to see that every suitable choice of $Q_{ji}$ with $\prod Q_{ji}=\square$ respects some (not necessarily unique) pairing $\sigma$ of $\mathbf b$. We denote the contribution of the pairing $\sigma$ to (\ref{eq: contribution of squares}) by
$$
S_{\sigma}=q^{-|\mathbf b|/2}\sum_{(Q_{ji}:\,1\le j\le k,\,1\le i\le b_j)\atop{Q\text{ respects }\sigma}}\prod_{1\le j\le k\atop{1\le i\le b_j}}\Lambda(Q_{ji}).
$$

Let $\sigma\neq\sigma'$ be two different pairings of $\mathbf b$. The sums $S_{\sigma},S_{\sigma'}$ might share terms, but the contribution of the shared terms is negligible. Indeed, if a choice of $(Q_{ji})$ respects both $\sigma$ and $\sigma'$, then there must be a prime $P$ such that $P^4|\prod Q_{ji}$ (otherwise the pairing $\sigma$ is determined uniquely by which of the prime values of $Q_{ji}$ coincide). The number of possible values of $\prod Q_{ji}$ is therefore $O(q^{|\mathbf b|/2-1})$ (since its square root is not squarefree) and since each of them has $O(1)$ possible divisors, the contribution of such terms to $S_{\sigma}$ is $O(q^{-1})$. Noting that the number of pairings is also $O(1)$, we obtain
\begin{equation}\label{eq: sum of S sigma}q^{-|\mathbf b|/2}\sum_{(Q_{ji}:\,1\le j\le k,\,1\le i\le b_j)\atop{\prod Q_{ji}=\square}}\prod_{1\le j\le k\atop{1\le i\le a_j}}\Lambda(Q_{ji})=\sum_{\sigma\text{ pairing of }\mathbf b}S_\sigma +O(q^{-1}).
\end{equation}

Now for a pairing $\sigma$ of $\mathbf b$ we have (using $\pi(j)=\frac {q^j}j+O(q^{j-1})$)
\begin{multline*}
S_{\sigma}=\left(\prod_{1\le j\le k\atop{1\le i\le b_j\atop{\sigma(ji)= ji',\,i'>i}}}q^{-j}\sum_{\deg P=j\atop{\text{ prime}}}\Lambda(P)^2\right)\cdot\left(\prod_{{1\le j\le k\atop{1\le i\le b_j\atop{\sigma(ji)=ji}}}}q^{-j/2}\sum_{\deg P=j/2\atop{\text{prime}}}\Lambda(P)\right)=\\
=\prod_{1\le j\le k\atop{1\le i\le b_j\atop{\sigma(ji)= ji',\,i'>i}}}\left(j+O(q^{-1})\right)\prod_{{1\le j\le k\atop{1\le i\le b_j\atop{\sigma(ji)=ji}}}}\left(1+O(q^{-1})\right)=\prod_{j=1}^k j^{b_j^{(\sigma)}}+O(q^{-1}),
\end{multline*}
where $b_j^{(\sigma)}$ was defined in Lemma \ref{prop_number_of_pairings}. Summing over all pairings $\sigma$ of $\mathbf b$ and using (\ref{eq: sum of S sigma}) and Lemma \ref{prop_number_of_pairings} we obtain the assertion (\ref{eq: contribution of squares}).

\end{proof}

\section{Final assembly}
\label{sec: final}

In the present section we combine the estimates of Proposition \ref{prop_S_sum_estimate} and Proposition \ref{prop_sum_to_pair_up} to derive Proposition \ref{prop: main}, which as we have seen in \S\ref{sec: hyperelliptic} completes the proof of Theorem \ref{thm: main}. We maintain Notation \ref{not: ji} throughout this section, in particular $Q_{ji}$ is always a prime or prime$^2$ with $\deg Q_{ji}=j$.

Throughout this section $\mathbf a$ is supported on $[1,k]$, where $k$ is a fixed natural number.
Denote \begin{equation}\label{eq: def A}A=\{ji:\,1\le j\le k,\,1\le i\le a_j\}.\end{equation}
\begin{definition}\label{def: compatible}Let $K\subset A$ be a subset and $\sim$ an equivalence relation on $A\setminus K$. We say that a collection $(Q_{ji}:\,1\le j\le k,\,1\le i\le a_j)$ is \emph{compatible with} $\sim$ if the following conditions hold:
\begin{enumerate}\item[(i)] $Q_{ji}=\text{prime}^2$ iff $ji\in K$ (so $Q_{ji}$ is prime iff $ji\in A\setminus K$).
\item[(ii)] If $ji,j'i'\in A\setminus K$ then $Q_{ji}=Q_{j'i'}$ iff $ji\sim j'i'$ (in particular $j=j'$ if $ji\sim j'i'$).
\end{enumerate}
\end{definition}

\begin{lem}\label{lem_sums_in_inclusion_exclusion} Let $K\subset A$ be a subset and $\sim$ an equivalence relation on $A\setminus K$ such that at least one equivalence class has size $\ge 3$. Then
\begin{equation}\label{eq: lem triple}q^{-2n-1-|\mathbf a|/2}\sum_{\deg h=2n+1\atop{\text{monic}}}\sum_{(Q_{ji}:\,1\le j\le k,\,1\le i\le a_j)\atop{\text{compatible with $\sim$}}}\prod_{1\le j\le k\atop{1\le i\le a_j}}\Lambda(Q_{ji})\left(\frac h{Q_{ji}}\right)=O(q^{-1}).\end{equation}
\end{lem}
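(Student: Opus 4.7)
My plan is to bound the LHS of (\ref{eq: lem triple}) by controlling the sum over compatible $(Q_{ji})$ for each fixed $h$, then summing trivially over the $q^{2n+1}$ choices of monic $h$ of degree $2n+1$. The Weil-type bound of Lemma \ref{lem: weil gen} will be the main source of cancellation, exploiting that the hypothesis ``some class has size $\ge 3$'' forces either an odd-sized class of size $\ge 3$ (yielding character cancellation from a large power of the Jacobi symbol) or an even-sized class of size $\ge 4$ (yielding savings from the prime-counting factors).

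First I would unpack the compatibility condition via Definition \ref{def: compatible}: each equivalence class $C_l$ of $\sim$ has a common $j$-value $j_l$ and determines a single prime $P_l$ of degree $j_l$ (the common value of $Q_{ji}$ for $ji\in C_l$), with primes from distinct classes being distinct; and for each $ji\in K$, $Q_{ji}=P_{ji}^2$ for some prime $P_{ji}$ of degree $j/2$. Writing $m_l=|C_l|$ and $S=\{l:m_l\text{ odd}\}$, the identity $\left(\tfrac h P\right)^m=\left(\tfrac h P\right)^{m\bmod 2}$ (vanishing when $P\mid h$) gives
$$\prod_{ji\in A}\left(\frac{h}{Q_{ji}}\right)=\prod_{l\in S}\left(\frac{h}{P_l}\right)$$
when every prime $P_l,P_{ji}$ is coprime to $h$, and $0$ otherwise. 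The von Mangoldt factors contribute $\prod\Lambda(Q_{ji})=\prod_l j_l^{m_l}\prod_{ji\in K}(j/2)=O(1)$.

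For each fixed $h$, I would bound the resulting sum over compatible $(Q_{ji})$ by summing independently over the primes indexed by $S$, by $S^c$, and by $K$. Applying Lemma \ref{lem: weil gen} to the $S$-primes (distinct, coprime to $h$) yields $O(q^{\sum_{l\in S}j_l/2})$; the Prime Polynomial Theorem gives $O(q^{\sum_{l\notin S}j_l})$ for the $S^c$-primes and $O(q^{\sum_{ji\in K}j/2})$ for the $K$-primes. Multiplying, summing trivially over $h$ (a factor of $q^{2n+1}$) and dividing by $q^{2n+1+|\mathbf a|/2}$, the exponent of $q$ simplifies---using $|\mathbf a|=\sum_l m_l j_l+\sum_{ji\in K}j$---to
$$\frac{1}{2}\sum_{l\in S}j_l(1-m_l)+\frac{1}{2}\sum_{l\notin S}j_l(2-m_l).$$
Each summand is $\le 0$ (since $m_l\ge 1$ odd for $l\in S$ and $m_l\ge 2$ even for $l\notin S$), and the class of size $\ge 3$ guaranteed by hypothesis contributes at most $-j_l\le -1$ in either case, yielding the desired bound $O(q^{-1})$.

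The main technical point I expect to navigate is enforcing the inter-class distinctness of primes: the $S$-primes must be distinct not only among themselves but also from the $S^c$-primes. I would handle this by applying Lemma \ref{lem: weil gen} to the $S$-primes with the $S^c$-primes held fixed, and observing that enforcing distinctness from any fixed finite set of primes only removes $O(1)$ terms from each inner character sum, so the loss can be absorbed into the implicit constants.
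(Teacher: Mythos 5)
Your proof is correct and follows essentially the same route as the paper's: unpack compatibility into equivalence classes, apply Lemma \ref{lem: weil gen} to the odd-multiplicity classes while counting the even classes and the prime-squares trivially, sum trivially over $h$, and check that the hypothesized class of size $\ge 3$ (hence of size $\ge 4$ if even) forces the exponent of $q$ down to $-1$. The only cosmetic difference is in enforcing distinctness between the odd- and even-class primes: the paper absorbs $\prod_{C}P_C^2$ into the modulus of the Jacobi symbol so that the offending terms vanish, whereas you use a removal/inclusion--exclusion argument; both work.
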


\begin{proof} Denote by $\mathcal C$ the set of equivalence classes of $\sim$. For any $\alpha=ji\in A$ denote $j(\alpha)=j$ and for $C\in\mathcal C$ let $j(C)=j(\alpha)$ for some (and therefore any) $\alpha\in C$. Denote
$$\mathcal C_0=\{C\in\mathcal C:\,2\,\mid\,|C|\},\quad \mathcal C_1=\{C\in\mathcal C:\,2\,\nmid\,|C|\}.$$ The LHS of (\ref{eq: lem triple}) can be rewritten (identifying the $Q_{ji}$ that must be equal due to compatibility with $\sim$) as
\begin{multline*}q^{-2n-1-|\mathbf a|/2}\sum_{\deg h=2n+1\atop{\text{monic}}}\prod_{\kappa\in K}\left(\sum_{Q=\text{prime}^2\atop{\deg Q=j(\kappa)}}\Lambda(Q)\left(\frac h{Q}\right)\right)\sum_{(P_C:\,C\in\mathcal C)\atop{\text{distinct primes}\atop{\deg P_C=j(C)}}}\prod_{C\in\mathcal C}\Lambda(P_C)^{|C|}\left(\frac h{P_C}\right)^{|C|}
\\
=q^{-2n-1-|\mathbf a|/2}\sum_{\deg h=2n+1\atop{\text{monic}}}\prod_{\kappa\in K}\left(\sum_{Q=\text{prime}^2\atop{\deg Q=j(\kappa)}}\Lambda(Q)\left(\frac h{Q}\right)\right)
\\
\cdot\sum_{(P_C:\,C\in\mathcal C_0)\atop{\text{distinct primes}\atop{\deg P_C=j(C)\atop{P_C\nmid h}}}}\left(\prod_{C\in\mathcal C_0}j(C)^{|C|}\prod_{D\in\mathcal C_1}j(D)^{|D|}
\sum_{(P_D:\,D\in\mathcal C_1)\atop{\text{distinct primes}\atop{\deg P_D=j(D)}}}\left(\frac{h{\prod_{C\in\mathcal C_0}}P_C^2}{P_D}\right)\right)
\end{multline*}

Applying Lemma \ref{lem: weil gen} to the innermost sum, the above expression is
\begin{equation}\label{eq: bound1}\ll q^{-2n-1-|\mathbf a|/2}q^{2n+1}q^{\frac 12\sum_{\kappa\in K}j(K)}q^{\sum_{C\in\mathcal C_0}j(C)}q^{\frac 12\sum_{D\in\mathcal C_1}j(D)}\le q^{-1},\end{equation}
the latter inequality is because by the assumption of the lemma we have either $|C|\ge 4$ for some $C\in\mathcal C_0$ (note that $|C|\ge 2$ for all $C\in\mathcal C_0$) or $|D|\ge 3$ for some $D\in\mathcal C_1$ and therefore $$\frac 12\sum_{\kappa\in K}j(K)+\sum_{C\in\mathcal C_0}j(C)+\frac 12\sum_{D\in\mathcal C_1}j(D)\le\frac 12\sum_{j=1}^kja_j-1 =\frac{|\mathbf a|}2-1$$
($j(C)$ is counted $|C|$ times in the sum on the RHS but only once on the LHS, same for $j(D)$). The bound (\ref{eq: bound1}) completes the proof.

\end{proof}

\begin{prop}\label{prop_main_sum_estimate}
    Let $\mathbf a$ be supported on $[1,k]$, and assume that $|\mathbf a|\le 4n+1$. Then
    \begin{equation*}
    R:=q^{-2n-1-|\mathbf a|/2}\sum_{\deg h=2n+1\atop{\mathrm{monic}}}\sum_{(Q_{ji}:\,1\le j\le k,\,1\le i\le a_j)}\prod_{1\le j\le k\atop{1\le i\le a_j}}\Lambda(Q_{ji})\left(\frac{h}{Q_{ji}}\right)
    =\sum_{\mathbf b\le\mathbf a}{\mathbf a\choose\mathbf b}g(\mathbf b)\phi(n,\mathbf a-\mathbf b)+O(q^{-{1/2}}).
    \end{equation*}
\end{prop}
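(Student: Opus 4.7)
The plan is to stratify the tuples $(Q_{ji})$ contributing to $R$ by their structural type and evaluate each stratum using the estimates already established. By Lemma \ref{prop_not_prime_or_square_or_non_square_free_is_ngeligible} we first replace the inner double sum in $R$ by its restriction to tuples with each $Q_{ji}$ a prime or a prime$^2$ and to $h\in\mathcal H_{2n+1}$, at the cost of $O(q^{-1})$. Every such tuple uniquely determines (and is determined by) the pair $(K,\sim)$ of Definition \ref{def: compatible}: set $K=\{ji:Q_{ji}=\mathrm{prime}^2\}$ and $ji\sim j'i'\iff Q_{ji}=Q_{j'i'}$ for $ji,j'i'\in A\setminus K$. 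Stratify the sum by $(K,\sim)$; by Lemma \ref{lem_sums_in_inclusion_exclusion} the strata with some $\sim$-class of size $\ge 3$ each contribute $O(q^{-1})$, and there are only $O(1)$ of them, so their total contribution is $O(q^{-1})$.

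Consider the remaining strata, where every $\sim$-class has size $1$ or $2$. Let $S\subseteq A\setminus K$ be the union of singleton classes and $\mathbf b\in\mathcal P$ its $j$-multiplicity profile, so that $\mathbf c:=\mathbf a-\mathbf b$ is the profile of $A\setminus S$. The restriction of $(K,\sim)$ to $A\setminus S$ is precisely a pairing $\sigma$ of $\mathbf c$ in the sense of Definition \ref{def: pairing}, with fixed points corresponding to $K$-indices (necessarily with $j$ even, since a prime square of degree $j$ exists only for $j$ even) and size-$2$ classes becoming the transpositions of $\sigma$. The number of subsets $S\subseteq A$ with multiplicity profile $\mathbf b$ is ${\mathbf a\choose\mathbf b}$.

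Fix such a stratum. The key observation is that for each $h\in\mathcal H_{2n+1}$ the squared-part sum
$$\sum_{\substack{(P_\kappa)_{\kappa\in K}\\(P_C)_C\,\mathrm{distinct}}}\prod_\kappa\tfrac{j(\kappa)}{2}\mathbf 1[P_\kappa\nmid h]\prod_C j(C)^2\mathbf 1[P_C\nmid h]=q^{|\mathbf c|/2}\prod_j j^{c_j^{(\sigma)}}\bigl(1+O(q^{-1/2})\bigr)$$
\emph{uniformly in $h$}: we use $\bigl(\tfrac{h}{P^2}\bigr)=\bigl(\tfrac{h}{P}\bigr)^2=\mathbf 1[P\nmid h]$, and since $h$ has only $O(1)$ prime divisors the coprimality condition removes only $O(1)$ primes from each count, so by the Prime Polynomial Theorem (used as in the proof of Proposition \ref{prop_sum_to_pair_up}) the leading term arises with relative error $O(q^{-1/2})$. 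Thus the squared part decouples from $h$ at leading order, and the remaining sum over $h$ and the singleton primes $(P_{ji})_{ji\in S}$ coincides with $T(n;\mathbf b)$ up to a relative $O(q^{-1/2})$ correction accounting for the requirement that singleton primes avoid the $P_C$'s (quantified using Lemma \ref{lem: weil gen}).

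Assembling: a single stratum contributes $q^{|\mathbf c|/2}\prod_j j^{c_j^{(\sigma)}}\cdot T(n;\mathbf b)+O\bigl(q^{2n+1+|\mathbf a|/2-1/2}\bigr)$. Summing over pairings $\sigma$ of $\mathbf c$ yields $g(\mathbf c)$ by Lemma \ref{prop_number_of_pairings}; Proposition \ref{prop_S_sum_estimate} (applicable since $|\mathbf b|\le|\mathbf a|\le 4n+1$) gives $T(n;\mathbf b)=q^{2n+1+|\mathbf b|/2}(\phi(n,\mathbf b)+O(q^{-1/2}))$. Summing over $\mathbf b\le\mathbf a$ with the ${\mathbf a\choose\mathbf b}$ multiplicities and dividing by $q^{2n+1+|\mathbf a|/2}$ produces
$$R=\sum_{\mathbf b\le\mathbf a}{\mathbf a\choose\mathbf b}\phi(n,\mathbf b)g(\mathbf a-\mathbf b)+O(q^{-1/2}),$$
which after reindexing $\mathbf b\mapsto\mathbf a-\mathbf b$ (using ${\mathbf a\choose\mathbf b}={\mathbf a\choose\mathbf a-\mathbf b}$) is the claimed identity. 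The main obstacle is the $h$-dependence of the squared-part sum through the coprimality factors; decoupling succeeds precisely because this dependence is uniformly $O(q^{-1/2})$ in relative size, owing to the bounded degree of $h$.
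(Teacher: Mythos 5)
Your argument is correct and rests on the same three pillars as the paper's proof (Lemma \ref{lem_sums_in_inclusion_exclusion} to kill multiplicity $\ge 3$, Proposition \ref{prop_S_sum_estimate} for the distinct-prime part, and the pairing combinatorics of Lemma \ref{prop_number_of_pairings} for the square part), but you organize the sum differently. The paper does not stratify by the exact type $(K,\sim)$; instead it sums over subsets $J\subset A$ that a tuple ``respects'' ($\prod_{ji\in J}Q_{ji}=\square$ and the $Q_{ji}$, $ji\notin J$, distinct primes, with \emph{no} requirement that these avoid the primes inside $J$), and pays for the resulting overcounting with an inclusion--exclusion whose cross terms $R_{J_1,\dots,J_s}$, $s\ge 2$, are shown to force a class of size $\ge 3$ and hence vanish by the same Lemma \ref{lem_sums_in_inclusion_exclusion}. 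The benefit of the paper's choice is that within each $R_J$ the square part and the distinct-prime part are genuinely independent (up to the coprimality of $h$ with the square part, removed at cost $O(q^{2n+|\mathbf a-\mathbf b|/2})$), so $R_{\mathbf b}$ factors cleanly as (Proposition \ref{prop_sum_to_pair_up}) $\times$ ($T(n;\mathbf a-\mathbf b)$). Your exact stratification avoids inclusion--exclusion but transfers the difficulty into the decoupling step: you must discard the configurations where a singleton prime coincides with a paired prime, and you must propagate the $h$-dependent error of the squared-part sum through the $h$-average. Both corrections are indeed $O(q^{-1/2})$ after normalization, the first by the degree count you indicate via Lemma \ref{lem: weil gen} (a coincidence merges two primes and drops the exponent by at least $j/2\ge 1/2$), the second because the error is uniform in $h$ and the singleton character sum is $O(q^{|\mathbf b|/2})$ for every $h$. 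Two small points of hygiene: phrase the comparison of the singleton sum with $T(n;\mathbf b)$ as an \emph{additive} error $O(q^{2n+1+|\mathbf b|/2-1/2})$ rather than a relative one, since $T(n;\mathbf b)$ vanishes identically for $0<|\mathbf b|\le 2n+1$ (your final assembly line already states it additively, which is the correct form); and the initial restriction to squarefree $h$ is unnecessary --- the proposition and $T(n;\mathbf b)$ are both sums over all monic $h$ of degree $2n+1$, so keeping all $h$ avoids an extra $O(q^{-1})$ back-and-forth.
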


\begin{proof} In what follows $\subset$ denotes non-strict inclusion of sets. We split the sum on the LHS of the proposition into terms in the following way: for every choice of subset $J\subset A$ of the indices (recall that $A$ is defined by (\ref{eq: def A})), we say that a choice $(Q_{ji}:1\le j\le k,1\le i\le a_j)$ respects $J$ if
\begin{enumerate}
    \item[(i)] $\prod_{ji\in J}Q_{ji}=\square$,
    \item[(ii)] $Q_{ji}$ for $ji\notin J$ are distinct primes.
\end{enumerate} We now consider the sums 
$$
R_J=q^{-2n-1-|\mathbf a|/2}
\sum_{\deg h=2n+1\atop{\mathrm{monic}}}
\sum_
{(Q_{ji}:\,1\le j\le k,\,1\le i\le a_j)\atop{\text{respects } J}
}
\prod_{1\le j\le k\atop{1\le i\le a_j}}\Lambda(Q_{ji})\left(\frac{h}{Q_{ji}}\right).
$$
We also define for subsets $J_1,\ldots,J_s\subset A$,
$$
R_{J_1,\ldots,J_s}=q^{-2n-1-|\mathbf a|/2}
\sum_{\deg h=2n+1\atop{\mathrm{monic}}}
\sum_
{(Q_{ji}:\,1\le j\le k,\,1\le i\le a_j)\atop{\text{respects } J_1,\ldots,J_s}
}
\prod_{1\le j\le k\atop{1\le i\le a_j}}\Lambda(Q_{ji})\left(\frac{h}{Q_{ji}}\right).
$$
Since any choice of $Q_{ji}$ respects some subset $J$ (collect all the squares and pairs of repeating primes so that only distinct primes remain), by inclusion-exclusion we have 
\begin{equation}\label{eq: inclusion-exclusion}
R =\sum_{J_1}R_{J_1}-\sum_{J_1,J_2}R_{J_1,J_2}+\sum_{J_1,J_2,J_3}R_{J_1,J_2,J_3}-\ldots,
\end{equation}
where summation in each term is over unordered tuples of distinct subsets of $A$.
We will show that $R_{J_1,\ldots,J_s}=O(q^{-1})$ for $s\ge 2$ (and $J_1,\ldots,J_s$ distinct) and evaluate $R_J$ for every $J$, which will complete the evaluation of $R$ (up to an $O(q^{-1})$ error term).

Let $J_1,\ldots,J_s,\,s\ge 2$ be distinct. Consider a collection $(Q_{ji}:\,1\le j\le k,\,1\le i\le a_j)$. Denote $K=\{ji: Q_{ji}=\text{prime}^2\}$ and define an equivalence relation $\sim$ on $A\setminus K$ (which consists of the prime $Q_{ji}$) by $ji\sim j'i'$ iff $Q_{ji}=Q_{j'i'}$. This is the unique choice of $(K,\sim)$ that is compatible with $(Q_{ji})$ in the sense of Definition \ref{def: compatible}. The collection $(Q_{ji})$ respects $J_l$ iff the following conditions hold:
\begin{enumerate}
\item[(i)] $K\subset J_l$.
\item[(ii)] For each equivalence class $C$ of $\sim$ we have that $|C\cap J_l|$ is even and $|C\setminus J_l|\le 1$.
\end{enumerate}
Therefore the condition that $(Q_{ji})$ respects $J_1,\ldots,J_s$ depends only on $(K,\sim)$. Moreover if (i-ii) above hold with two distinct $J_1,J_2$ then one of the equivalence classes $C$ must be of size $\ge 3$. To see this let (WLOG) $uv\in J_1\setminus J_2$. Then $Q_{uv}$ is a prime which appears at least twice in $(Q_{ji})_{ji\in J_1}$ (because $\prod_{ji\in J_1}Q_{ji}=\square$) and therefore at least once in $(Q_{ji})_{ji\in J_2}$ (because $(Q_{ji})_{ji\in A\setminus J_2}$ are distinct primes). It follows that $Q_{uv}$ appears at least twice in $(Q_{ji})_{ji\in J_2}$ (because $\prod_{ji\in J_2}Q_{ji}=\square$) and since $uv\not\in J_2$ it appears $\ge 3$ times in $(Q_{ji})_{ji\in A}$. Its equivalence class $C$ has therefore size $\ge 3$. 

Using Lemma \ref{lem_sums_in_inclusion_exclusion} we obtain
$$R_{J_1,\ldots,J_s}=\sum_{K,\sim}q^{-2n-1-|\mathbf a|/2}\sum_{\deg h=2n+1\atop{\text{monic}}}\sum_{(Q_{ji}:\,1\le j\le k,\,1\le i\le a_j)\atop{\text{compatible with $\sim$}}}\prod_{1\le j\le k\atop{1\le i\le a_j}}\Lambda(Q_{ji})\left(\frac h{Q_{ji}}\right)=O(q^{-1}).$$
Here the sum $\sum_{K,\sim}$ is over all $(K,\sim)$ such that compatibility with it implies respecting $J_1,\ldots,J_s$.

We conclude that $R_{J_1,\ldots,J_S}=O(q^{-1})$ and from (\ref{eq: inclusion-exclusion}) that
\begin{equation}\label{eq: R through RJ}R=\sum_{J\subset A}R_J+O(q^{-1}).\end{equation}
Next we evaluate $R_J$ for $J\subset A$.
With each $J\subset A$ we associate a partition $\mathbf b\le\mathbf a$ by $b_j=|\{1\le i\le a_j: ji\in J\}|$.
The number of subsets $J$ corresponding to a given $\mathbf b\le\mathbf a$ is exactly $\mathbf a\choose\mathbf b$ and $R_J$ depends only on $\mathbf b$ and not on the specific $J$. Hence from (\ref{eq: R through RJ}) we have
\begin{equation}\label{eq: R through Rb}R=\sum_{\mathbf b\le\mathbf a}{\mathbf a\choose\mathbf b}R_{\mathbf b}+O(q^{-1}),\end{equation}
where
$$R_{\mathbf b}=q^{-2n-1-|\mathbf a|/2}
\sum_{\deg h=2n+1\atop{\mathrm{monic}}}
\sum_
{(Q_{ji}:\,1\le j\le k,\,1\le i\le b_j)\atop{\prod_{1\le j\le k\atop{1\le i\le b_j}}Q_{ji}=\square\atop{(Q_{ji})_{1\le j\le k\atop{b_j<i\le a_j}}\text{distinct primes}}}}
\prod_{1\le j\le k\atop{1\le i\le a_j}}\Lambda(Q_{ji})\left(\frac{h}{Q_{ji}}\right)
$$
(the latter expression is just $R_J$ for $J=\{ji:\,1\le j\le k,\,1\le i\le b_j\}$, which corresponds to $\mathbf b$).
It remains to evaluate $R_{\mathbf b}$ for each $\mathbf b\le\mathbf a$.

Fix a partition $\mathbf b\le \mathbf a$. Let $(Q_{ji}:\,1\le j\le k,\,1\le i\le a_j)$ be a choice respecting $J=\{ji:\,1\le j\le k,\,1\le i\le b_j\}.$ Recall that this means that $\prod_{1\le j\le k\atop{1\le i\le b_j}}Q_{ji}=\square$ and $Q_{ji},\,1\le j\le k,\,b_j< i\le a_j$ are distinct primes. If we fix $Q_{ji}$ for $1\le j\le k,\,1\le i\le b_j$ then 
\begin{multline*}
\sum_{\deg h=2n+1\atop{\text{monic}}}\sum_{(Q_{ji}:\,1\le j\le k,\,b_j<i\le a_j)\atop{\text{distinct primes}}}\prod_{1\le j\le k\atop{1\le i\le a_j}}\Lambda(Q_{ji})\left(\frac{h}{Q_{ji}}\right)=
\\=\prod_{1\le j\le k\atop{1\le i\le b_j}}\Lambda(Q_{ji})\sum_{\deg h=2n+1\atop{\text{monic}\atop{\left(h,\prod_{1\le j\le k,\,1\le i\le b_j} Q_{ji}\right)=1}}}\sum_{(Q_{ji}:\,1\le j\le k,\,b_j<i\le a_j)\atop{\text{distinct primes}}}
\prod_{1\le j\le k\atop{b_j<i\le a_j}}\Lambda(Q_{ji})\left(\frac{h}{Q_{ji}}\right)=\\
=\prod_{1\le j\le k\atop{1\le i\le b_j}}\Lambda(Q_{ji})\sum_{\deg h=2n+1\atop{\text{monic}}}\sum_{(Q_{ji}:\,1\le j\le k,\, b_j<i\le a_j)\atop{\text{distinct primes}}}\prod_{1\le j\le k\atop{b_j<i\le a_j}}\Lambda(Q_{ji})\left(\frac{h}{Q_{ji}}\right)+O(q^{2n+|\mathbf a-\mathbf b|/2}),
\end{multline*}
When $\mathbf b$ is not the empty partition, the second equality holds because there are $O(q^{2n})$ polynomials $h$ which are not coprime with $\prod_{1\le j\le k\atop{1\le i\le b_j}}Q_{ji}$ and by Lemma \ref{lem: weil gen} each of them contributes $O(q^{|\mathbf a-\mathbf b|/2})$ to the sum. When $\mathbf b$ is the empty partition, the transition is immediate. Summing over the $O(q^{|\mathbf b|/2})$ appropriate choices of $(Q_{ji})_{1\le j\le k,\,1\le i\le b_j}$ (i.e. such that the product is a square) and using (\ref{eq: def T}), Proposition \ref{prop_S_sum_estimate} and Proposition \ref{prop_sum_to_pair_up} (the propositions applied to $\mathbf a-\mathbf b$ and $\mathbf b$ respectively) we obtain
\begin{multline*}
R_{\mathbf b}=q^{-2n-1-|\mathbf a|/2}\left(\sum_{(Q_{ji}:\,1\le j\le k,\,1\le i\le b_j)\atop\prod Q_{ji}=\square}\prod_{1\le j\le k\atop{1\le i\le b_j}}\Lambda(Q_{ji})\right)\left(\sum_{\deg h=2n+1\atop{\text{monic}}}\sum_{(Q_{ji}:\,1\le j\le k,\,b_j<i\le a_j)\atop{\text{distinct primes}}}\prod_{1\le j\le k\atop{b_j<i\le a_j}}\Lambda(Q_{ji})\left(\frac{h}{Q_{ji}}\right)\right)\\+O(q^{-1})
=
q^{-2n-1-|\mathbf a|/2}\left(\sum_{(Q_{ji}:\,1\le j\le k,\,1\le i\le a_j)\atop\prod {Q_{ji}}=\square}\prod_{1\le j\le k\atop{1\le i\le b_j}}\Lambda(Q_{ji})\right)T(\mathbf a-\mathbf b)+O(q^{-1})
=g(\mathbf b)\phi(n,\mathbf a-\mathbf b)+O(q^{-1/2}).\end{multline*}
Combining this with (\ref{eq: R through Rb}) gives the assertion of Proposition \ref{prop_main_sum_estimate}.\end{proof}
\begin{proof}[Proof of Proposition \ref{prop: main}]
The assertion follows by combining Lemma \ref{prop_not_prime_or_square_or_non_square_free_is_ngeligible} and Proposition \ref{prop_main_sum_estimate}.
\end{proof}

\begin{proof}[Proof of Theorem \ref{thm: main}] As observed in \S\ref{sec: hyperelliptic}, Theorem \ref{thm: main} follows from Proposition \ref{prop_main_sum_estimate}.\end{proof}

\section{Application to linear statistics of eigenvalues}
\label{sec: waxman}

In the present section we prove Theorem \ref{thm: waxman}.
For the rest of the section we assume that $f\in C^{\infty}(\R/\Z)$ is a fixed even real-valued function, $m$ is a fixed natural number, $\epsilon>0$ is a fixed constant. All asymptotic notation pertains to the limit $n\to\infty$ and has implicit constants and rate of convergence which may depend on $f,m,\epsilon$.

\begin{lem}\label{lem: waxman} Assume $\nu\ge\epsilon n$ and let $\mathbf a\in\mathcal P$ be such that $|\mathbf a|\le 4n+1$ and $a_j=0$ whenever $|j-\nu|>n^{1/2}$. Then $$M(\USp(2n),\mathbf a)=\left(\prod_{j=1}^\infty \eta_{a_j}(a_j-1)!!\right)\nu^{\ell(\mathbf a)/2}+O\left(n^{(\ell(\mathrm a)-1)/2}\right).$$\end{lem}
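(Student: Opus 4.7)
The plan is to apply Theorem \ref{thm: main} to write $M(\USp(2n),\mathbf a)$ as a sum indexed by $\mathbf b\le\mathbf a$, and to show that only $\mathbf b=\mathbf a$ produces the main term while every $\mathbf b<\mathbf a$ is absorbed into the error. The crucial preliminary observation is that the hypotheses force every $j$ with $a_j>0$ to satisfy $j=\nu(1+O(n^{-1/2}))$ and $j\asymp n$; combined with $|\mathbf a|\le 4n+1$ this gives $\ell(\mathbf a)=O(1)$ and each $a_j=O(1)$ (constants depending on $\epsilon$), so that $\binom{\mathbf a}{\mathbf b}$, the number of $\mathbf b\le\mathbf a$, and all double-factorials appearing are bounded.

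For the $\mathbf b=\mathbf a$ term, which equals $(-1)^{\ell(\mathbf a)}g(\mathbf a)$ because $\phi(n,\mathbf 0)=1$, I would split on the parities of the $a_j$. If every $a_j$ is even then \eqref{eq: def g} gives $g_j(a_j)=j^{a_j/2}(a_j-1)!!+O(j^{a_j/2-1})$ (for even $j$ the remaining summands in the definition each lose a full factor of $j$); substituting $j^{a_j/2}=\nu^{a_j/2}(1+O(n^{-1/2}))$ and multiplying the boundedly many factors yields $g(\mathbf a)=\nu^{\ell(\mathbf a)/2}\prod_j(a_j-1)!!+O(n^{(\ell(\mathbf a)-1)/2})$. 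Since $\ell(\mathbf a)$ is even and $\eta_{a_j}=1$ throughout, this is exactly the claimed main term. If instead some $a_j$ is odd the claimed main term vanishes, so it suffices to show $g(\mathbf a)=O(n^{(\ell(\mathbf a)-1)/2})$: for such a $j$, either $j$ itself is odd and $g_j(a_j)=0$, or $j$ is even and the top power in $g_j(a_j)$ is only $j^{(a_j-1)/2}$, yielding a saving of $n^{1/2}$ against the all-even case.

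For $\mathbf b<\mathbf a$ I would use \eqref{eq: def phi} to observe that $\phi(n,\mathbf a-\mathbf b)=0$ unless $|\mathbf a-\mathbf b|$ is even and $\ge 2n+2$, and is otherwise $O(1)$ (its defining sum contains boundedly many bounded summands). Since $\mathbf b\ne\mathbf a$ forces $\ell(\mathbf a-\mathbf b)\ge 1$, hence $\ell(\mathbf b)\le\ell(\mathbf a)-1$, the trivial estimate $g(\mathbf b)=O(\prod_j j^{b_j/2})=O(\nu^{\ell(\mathbf b)/2})=O(n^{(\ell(\mathbf a)-1)/2})$ together with $\binom{\mathbf a}{\mathbf b}=O(1)$ and summation over the $O(1)$ admissible $\mathbf b$ produces total contribution $O(n^{(\ell(\mathbf a)-1)/2})$. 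The main obstacle is the careful parity bookkeeping for $g(\mathbf a)$ in the second paragraph: the even-$j$ subcases of \eqref{eq: def g} must be exploited to extract the exact leading coefficient in the all-even case, and to confirm the anticipated $n^{-1/2}$ saving whenever $\prod_j\eta_{a_j}=0$.
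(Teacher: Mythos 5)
Your proposal is correct and follows essentially the same route as the paper: expand $M(\USp(2n),\mathbf a)$ via Theorem \ref{thm: main}, extract the main term from $\mathbf b=\mathbf a$ using $g_j(a_j)=\eta_{a_j}(a_j-1)!!\,\nu^{a_j/2}+O(n^{(a_j-1)/2})$ for $j=\nu+O(n^{1/2})$, and absorb every $\mathbf b<\mathbf a$ into the error via $\ell(\mathbf b)\le\ell(\mathbf a)-1$ together with the boundedness of $\ell(\mathbf a)$, of $\phi(n,\mathbf a-\mathbf b)$, and of the binomial coefficients. The only difference is that you spell out the parity bookkeeping for $g_j$ and the sign $(-1)^{\ell(\mathbf a)}$ that the paper leaves implicit.
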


\begin{proof}
It follows from (\ref{eq: def g}) and the assumption on $a_j$ that only $j=\nu+O(n^{1/2})$ contribute to the RHS of (\ref{eq: thm1}). By (\ref{eq: def g}), for such a $j$ we have $g_j(\mathrm a_j)=\eta_{a_j}(a_j-1)!!\nu^{a_j/2}+O\left(n^{(a_j-1)/2}\right))$ and therefore $$g(\mathbf a)=\left(\prod_{j=1}^\infty \eta_{a_j}(a_j-1)!!\right)\nu^{\ell(\mathbf a)/2}+O\left(n^{(\ell(\mathbf a)-1)/2}\right)$$ and $g(\mathbf b)=O(n^{(\ell(\mathbf a)-1)/2})$ for any $\mathbf b<\mathbf a$. Since $\nu\ge\epsilon n$ and $a_j\neq 0$ only if $j=\nu+O(n^{1/2})$, we have $\ell(\mathbf a)=O(1)$ and therefore $\phi(n;\mathbf a-\mathbf b)=O(1)$ and the number of possible $\mathbf b$ (\ref{eq: thm1}) is also $O(1)$. The assertion follows from (\ref{eq: thm1}) and the observations above.
\end{proof}

\begin{proof}[Proof of Theorem \ref{thm: waxman}] Consider the Fourier series $f(t)=\sum_{j\in\Z}\hat f(j)e(jt).$ Since $f$ is even and real valued the same is true for $\hat f$. Since $f\in C^\infty(\R/\Z)$ the coefficients $\hat f(j)$ decay faster than any fixed power of $j$ as $|j|\to\infty$. We have

\begin{multline*}W_{f,\nu}(U)^m=\left[\sum_{k=1}^{2n}\sum_{j\in\Z}\hat f(j)e((\nu+j)\theta_k)\right]^m=
\sum_{j_1,\ldots,j_m\in\Z}\hat f(j_1)\cdots\hat f(j_m)\prod_{i=1}^m\mathrm{tr}(U^{j_i+\nu})
\\=\sum_{j_1,\ldots,j_m\in\Z}\hat f(j_1-\nu)\cdots\hat f(j_m-\nu)\prod_{i=1}^m\mathrm{tr}(U^{j_i}).\end{multline*}

Using Lemma \ref{lem: waxman}, the condition $\epsilon n\le\nu\le\frac{4n}m-n^{1/2}$ and the fast decay of $\hat f$ we calculate:
\begin{equation*}
\begin{split}\int_{\USp_{2n}}W_{f,\nu}(U)^m\mathrm dU&=\sum_{j_1,\ldots,j_m\in\Z}\hat f(j_1-\nu)\cdots\hat f(j_m-\nu)\int_{\USp(2n)}\prod_{i=1}^m\mathrm{tr}(U^{j_i})\mathrm dU
\\&=
\sum_{j_1,\ldots,j_m\in\N\atop{|j_i-\nu|<n^{1/2}}}\hat f(j_1-\nu)\cdots\hat f(j_m-\nu)\int_{\USp(2n)}\prod_{i=1}^m\mathrm{tr}(U^{j_i})\mathrm dU+O(1)
\\&=\sum_{\mathbf a\in\mathcal P\atop{\ell(\mathbf a)=m\atop{a_j\neq 0\Rightarrow|j-\nu|<n^{1/2}}}}\frac{m!}{\prod_{j=1}^\infty a_j!}\prod_{j=1}^\infty \hat f(j-\nu)^{a_j}M(\USp(2n),\mathbf a)+O(1)
\\&=\sum_{\mathbf a\in\mathcal P\atop{\ell(\mathbf a)=m\atop{2|a_j\forall j\atop{a_j\neq 0\Rightarrow|j-\nu|<n^{1/2}}}}}\frac{m!}{\prod_{j=1}^\infty a_j!}\nu^{m/2}\prod_{j=1}^\infty  (a_j-1)!!\hat f(j-\nu)^{a_j}+O(n^{(m-1)/2})
\\&=\sum_{\mathbf a\in\mathcal P\atop{\ell(\mathbf a)=m\atop{2|a_j\forall j\atop{a_j\neq 0\Rightarrow|j-\nu|<n^{1/2}}}}}\frac{m!}{\prod_{j=1}^\infty 2^{a_j/2}(a_j/2)!}\nu^{m/2}\prod_{j=1}^\infty  \hat f(j-\nu)^{a_j}+O(n^{(m-1)/2})
\\&=\eta_m\frac{m!\nu^{m/2}}{2^{m/2}(m/2)!}\sum_{\mathbf a\in\mathcal P\atop{\ell(\mathbf a)=m\atop{2|a_j\forall j\atop{a_j\neq 0\Rightarrow|j-\nu|<n^{1/2}}}}}\frac{(m/2)!}{\prod_{j=1}^\infty (a_j/2)!}\prod_{j=1}^\infty  \hat f(j-\nu)^{a_j}+O(n^{(m-1)/2})
\\&=\eta_m\frac{m!\nu^{m/2}}{2^{m/2}(m/2)!}\sum_{\mathbf a\in\mathcal P\atop{\ell(\mathbf a)=m\atop{2|a_j\forall j}}}\frac{(m/2)!}{\prod_{j=1}^\infty (a_j/2)!}\prod_{j=1}^\infty  \hat f(j-\nu)^{a_j}+O(n^{(m-1)/2})
\\&=\eta_m(m-1)!!\cdot\nu^{m/2}\left(\sum_{j\in\Z}^\infty\hat f(j)^2\right)^{m/2}+O(n^{(m-1)/2})\\&=\eta_m(m-1)!!\cdot\nu^{m/2
}\|f\|_{L^2}^{m/2}+O(n^{(m-1)/2}).
\end{split}
\end{equation*}
\end{proof}
\bibliography{mybib}
\bibliographystyle{alpha}

\end{document}